\theoremstyle{plain}
\newtheorem{thm}{Theorem}[section]
\newtheorem{cor}[thm]{Corollary}
\newtheorem{prop}[thm]{Proposition}
\newtheorem{lemma}[thm]{Lemma}
\theoremstyle{definition}
\theoremstyle{remark}
\newtheorem{rem}{Remark}
 \newtheorem{rmk}[subsection]{Remark}
\DeclareMathOperator{\PSL}{PSL}
\DeclareMathOperator{\SL}{SL}\DeclareMathOperator{\Aut}{Aut}
\DeclareMathOperator{\Comm}{Comm}
\DeclareMathOperator{\Isom}{Isom}
\DeclareMathOperator{\SO}{SO}
\DeclareMathOperator{\SU}{SU}
\DeclareMathOperator{\PGL}{PGL}
\DeclareMathOperator{\Res}{Res}
\DeclareMathOperator{\Ind}{Ind}\DeclareMathOperator{\Hom}{Hom}
\DeclareMathOperator{\End}{End}\DeclareMathOperator{\Gal}{Gal}
\DeclareMathOperator{\Var}{Var}
\DeclareMathOperator{\Cor}{Cor}\DeclareMathOperator{\Mot}{Mot}
\DeclareMathOperator{\CH}{CH}
\DeclareMathOperator{\SB}{SB}\DeclareMathOperator{\Bl}{Bl}
\DeclareMathOperator{\Grp}{Grp}\DeclareMathOperator{\CoInd}{CoInd}
\newcommand{\bdef}{\overset{\text{def}}{=}}
\newcommand{\LL}{\mathbb{L}}
\newcommand{\innp}[1]{\left< #1 \right>}
\newcommand{\abs}[1]{\left\vert#1\right\vert}
\newcommand{\set}[1]{\left\{#1\right\}}
\newcommand{\Hy}{\mathbf{H}}
\newcommand{\N}{\mathbf{N}}
\newcommand{\Q}{\mathbf{Q}}
\newcommand{\R}{\mathbf{R}}
\newcommand{\Z}{\mathbf{Z}}
\newcommand{\C}{\mathbf{C}}
\newcommand {\spec} {{\mathrm{Spec}\,}}
\newcommand{\F}{\mathbf{F}}
 \newcommand {\PP} {{\mathbf P}}
\begin{document}
\bibliographystyle{plain}

%
\title{\textbf{Integral Gassman equivalence of \\ algebraic and hyperbolic manifolds}}
\author{D.~Arapura\thanks{Purdue University, West Lafayette, IN. E-mail: \tt{arapura@purdue.edu}},~~J.~Katz\thanks{Purdue University, West Lafayette, IN. E-mail: \tt{katz9@purdue.edu}},~~ D. B. McReynolds\thanks{Purdue University, West Lafayette, IN. E-mail: \tt{dmcreyno@purdue.edu}},~~P.~Solapurkar\thanks{Purdue University, West Lafayette, IN. E-mail: \tt{psolapur@purdue.edu}}  }
\maketitle

\begin{abstract}
\noindent In this paper we construct arbitrarily large families of smooth projective varieties and closed Riemannian manifolds that share many algebraic and analytic invariants. For instance, every non-arithmetic, closed hyperbolic $3$--manifold admits arbitrarily large collections of non-isometric finite covers which are strongly isospectral, length isospectral, and have isomorphic integral cohomology where the isomorphisms commute with restriction and co-restriction. We can also construct arbitrarily large collections of pairwise non-isomorphic smooth projective surfaces where these isomorphisms in cohomology are natural with respect to Hodge structure or as Galois modules. In particular, the projective varieties have isomorphic Picard and Albanese varieties, and they also have isomorphic effective Chow motives. Our construction employs an integral refinement of the Gassman--Sunada construction that has recently been utilized by D. Prasad. One application of our work shows the non-injectivity of the map from the Grothendieck group of varieties over $\overline{\Q}$ to the Grothendieck group of the category of effective Chow motives. We also answer a question of D. Prasad. 
\end{abstract}

\section{Introduction}

The main purpose of the present article is the construction of geometric objects which share a large class of algebraic, analytic, geometric, and topological invariants. Our main tool is a refinement of a construct that dates back to Gassman which has been utilized by Perlis \cite{Perlis}, Sunada \cite{sunada}, and others. Given a commutative ring $R$ with identity, a  group $G$, and a pair of subgroups $P_1,P_2 \leq G$, we say that $P_1,P_2$ are \textbf{$R$--equivalent} if $R[G/P_1]$ and $R[G/P_2]$ are isomorphic as left $R[G]$--modules. When $G$ is finite and $P_1,P_2$ are $\Q$--equivalent, the associated triple $(G,P_1,P_2)$ is called a \textbf{Gassman triple}. For general $R$, $(G,P_1,P_2)$ is called an \textbf{$R$--Gassman triple}. 

Scott \cite{scott} found non-conjugate $\Z$--equivalent subgroups $P_1,P_2$ of $\PSL(2,\mathbf{F}_{29})$. The subgroups $P_1,P_2$ are isomorphic to $\mathrm{Alt}(5)$ and are conjugate in $\PGL(2,\mathbf{F}_{29})$. D. Prasad \cite{prasad} recently employed this $\Z$--Gassman triple $(\PSL(2,\mathbf{F}_{29}),P_1,P_2)$ to construct non--isometric Riemann surfaces with isomorphic Jacobian varieties viewed only as unpolarized abelian varieties. He also constructed a pair of non--isomorphic finite extensions of $\Q$ with isomorphic idele class groups and adele rings. In particular, these finite extensions are arithmetically  equivalent (i.e.~have the same Dedekind zeta functions). Recently, the third author with B.~Linowitz and N.~Miller \cite{LMM} used non--isomorphic fields with isomorphic adele rings to construction isomorphisms between various Galois cohomology sets that arise in the study of $K$--forms of semisimple Lie groups. One instance of this was the construction of an isomorphism between the Brauer groups of the fields which was compatible with the restriction and co-restriction maps. The bijections between other Galois cohomology sets was also compatible with respect to the restriction and co-restriction maps. 


\subsection{Differential geometric examples}

Our results split across algebraic and differential geometry. We state our differential geometric results first. Before doing so, we require some additional notation and terminology. Given a closed, Riemannian manifold $M$ with associated Laplace--Beltrami operator $\triangle_M$, the operator $\triangle_M$ acts on the space of $L^2$ functions or $L^2$ $k$--forms $\Omega^k(M)$. We denote the associated eigenvalue spectrum for the operator $\triangle_M$ acting on $\Omega^k(M)$ by $\mathcal{E}_k(M)$. In the case of $k=0$, we denote the eigenvalue spectrum by $\mathcal{E}(M)$ and refer to this as the \textbf{eigenvalue spectrum}. The spectrum $\mathcal{E}(M)$ is a well studied analytic invariant of the Riemannian manifold $M$ and is known to determine the dimension, volume, and total scalar curvature. A related geometric  invariant is the \textbf{primitive geodesic length spectrum} $\mathcal{L}_p(M)$ of $M$. Assuming for simplicity that $M$ is negatively curved, each free homotopy class of closed curves on $M$ has a unique geodesic representative. We define $\mathcal{L}_p(M)$ to be the set of lengths (with multiplicity) of each geodesic representative in each free homotopy class. We denote by $H^k(M,\Z)$ the \textbf{$k$th singular cohomology group} of $M$ with trivial $\Z$--coefficients. Given a finite cover $M' \to M$, we have induced homomorphisms $\Res\colon H^k(M,\Z) \to H^k(M',\Z)$ and $\Cor\colon H^k(M',\Z) \to H^k(M,\Z)$. For a pair of finite covers $M_1,M_2 \to M$, we say that a morphism $\psi_k\colon H^k(M_1,\Z) \to H^k(M_2,\Z)$ is \textbf{compatible} if the diagram
\begin{equation}\label{Eq:Natural}
\begin{tikzcd}
& H^k(M,\Z) \arrow[rd,"\Res", bend left = 30] \arrow[ld,"\Res"', bend right = 30] & \\ H^k(M_1,\Z) \arrow[rr,"\psi_k"', bend right = 45] \arrow[ru,"\Cor"', bend right = 30]  & & H^k(M_2,\Z) \arrow[lu,"\Cor", bend left = 30]&
\end{tikzcd}
\end{equation}
commutes. Finally, $M$ is called \textbf{large} if there exists a finite index subgroup $\Gamma_0 \leq \pi_1(M)$ and a surjective homomorphism of $\Gamma_0$ to a non-abelian free group. We now state our first result and refer the reader to \S 2 for a brief review of real/complex hyperbolic manifolds and the definition of non-arithmetic manifolds.

\begin{thm}\label{Sec:Intro:T1}
Let $M$ be a closed hyperbolic $n$--manifold that is large and non-arithmetic. Then for each $j \in \N$, there exist pairwise non-isometric, finite Riemannian covers $M_1,\dots,M_j$ of $M$ such that the following holds:
\begin{itemize}
\item[(1)]
$\mathcal{E}_k(M_i) = \mathcal{E}_k(M_{i'})$ for all $k$ and all $i,i'$.
\item[(2)]
$\mathcal{L}_p(M_i) = \mathcal{L}_p(M_{i'})$ for all $i,i'$.
\item[(3)]
There exist compatible isomorphisms $\psi_k\colon H^k(M_i,\Z) \to H^k(M_{i'}, \Z)$ for all $k$ and for all $i,i'$.
\end{itemize}
\end{thm}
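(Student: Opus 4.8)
The plan is to exploit the $\Z$--Gassman triple $(\PSL(2,\mathbf{F}_{29}),P_1,P_2)$ of Scott, fed into a Sunada--type construction. Since $M$ is large, there is a finite-index subgroup $\Gamma_0 \leq \pi_1(M)$ admitting a surjection $\Gamma_0 \twoheadrightarrow F$ onto a non-abelian free group; composing with a suitable surjection $F \twoheadrightarrow \PSL(2,\mathbf{F}_{29})$ and then, if necessary, passing to a further finite-index subgroup of $\pi_1(M)$ and inducing, one obtains a surjection $\rho\colon \pi_1(M) \twoheadrightarrow G$ for $G = \PSL(2,\mathbf{F}_{29})$ (or a group containing it), with $P_1, P_2$ the two non-conjugate $\Z$-equivalent copies of $\mathrm{Alt}(5)$. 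Let $\Gamma_i = \rho^{-1}(P_i)$ and let $M_i \to M$ be the corresponding finite covers. Because $P_1$ and $P_2$ are non-conjugate in $G$ (and one checks the surjection can be arranged so that $\Gamma_1,\Gamma_2$ are non-conjugate in $\pi_1(M)$), Sunada's theorem gives that $M_1$ and $M_2$ are strongly isospectral, yielding (1); the same transplantation argument applied to the geodesic flow / trace formula on closed geodesics gives length-isospectrality, yielding (2). To get $j$ covers rather than two, I would either iterate (taking fiber products with independently chosen such surjections, so that the $\Z$-equivalence is preserved in each factor) or, more cleanly, observe that $R[G/P_1]\cong R[G/P_2]$ as $R[G]$-modules for every $R$ forces all the invariants in question to agree simultaneously, so one may list finitely many intermediate covers between $M_1$ and some common cover all of which are mutually $\Z$-equivalent; the key point is that $j$ is arbitrary because we may take iterated fiber products, the relevant permutation modules tensoring appropriately.

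For part (3), the crucial input is that $P_1$ and $P_2$ are $\Z$-equivalent, not merely $\Q$-equivalent: an isomorphism $\Phi\colon \Z[G/P_1] \xrightarrow{\sim} \Z[G/P_2]$ of $\Z[G]$-modules. The cohomology $H^k(M_i,\Z)$ can be computed, up to the relevant torsion subtleties, via the transfer: there is a $\pi_1(M)$-equivariant identification of the chain complex of the universal cover tensored with $\Z[\pi_1(M)/\Gamma_i] = \Z[G/P_i]$ with (a complex computing) $H^*(M_i,\Z)$, using Shapiro's lemma $H^*(M_i,\Z) = H^*(\pi_1(M); \Z[\pi_1(M)/\Gamma_i])$ when $M$ is aspherical — which holds here since $M$ is hyperbolic, hence a $K(\pi,1)$. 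Then $\Phi$ induces isomorphisms $\psi_k\colon H^k(M_1,\Z) \to H^k(M_2,\Z)$ for all $k$. The compatibility square (\ref{Eq:Natural}) then follows because, under the Shapiro identification, $\Res$ corresponds to the map induced by $\Z \hookrightarrow \Z[G/P_i]$ (the inclusion of invariants / diagonal, i.e.\ the unit $\Z[G]\text{-module map } \Z \to \Z[G/P_i]$ sending $1 \mapsto \sum_{gP_i}$), $\Cor$ corresponds to the augmentation $\Z[G/P_i] \to \Z$, and any $\Z[G]$-module isomorphism $\Z[G/P_1]\cong\Z[G/P_2]$ necessarily intertwines both the canonical inclusion of trivial submodules and the augmentation maps — the first because both trivial summands are characterized intrinsically, the second after possibly composing $\Phi$ with an automorphism of $\Z[G/P_2]$ fixing these canonical maps. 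One should verify this normalization is possible; it is, because the augmentation and coaugmentation of a permutation module $\Z[G/P]$ are canonical, and Scott's isomorphism can be chosen (or corrected) to respect them.

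I expect the main obstacle to be part (3), specifically two intertwined points: first, ensuring that Scott's $\Z[G]$-module isomorphism $\Phi$ can be normalized to commute with both the augmentation map $\Z[G/P_i]\to\Z$ and its section/the co-augmentation, so that the full diagram (\ref{Eq:Natural}) — not just the outer isomorphism — commutes; and second, dealing with the fact that $H^*(M_i,\Z)$ is not literally $H^*(G;\Z[G/P_i])$ but rather $H^*(\Gamma_i;\Z)$, so one must pass through Shapiro's lemma carefully and track that the restriction and corestriction maps of the covering $M_i \to M$ really do correspond to the unit and augmentation/counit maps of the permutation modules at the level of cochain complexes, not merely on cohomology. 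Once the chain-level equivariant statement is set up — i.e.\ that $C^*(\widetilde{M};\Z)\otimes_{\Z[\pi_1(M)]}\Z[\pi_1(M)/\Gamma_i]$ computes $H^*(M_i;\Z)$ with $\Res$ and $\Cor$ realized by the module maps — the conclusion is formal: apply $C^*(\widetilde M;\Z)\otimes_{\Z[\pi_1(M)]}(-)$ to $\Phi$ and take cohomology. The isospectrality statements (1) and (2), by contrast, are a direct invocation of the (by now standard) Sunada/Pesce strong-isospectrality machinery applied to the $\Q$-equivalence underlying the triple, and the "arbitrarily large $j$" is handled by iterated fiber products, so I regard those as routine given the setup.
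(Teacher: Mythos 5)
Your framework for parts (1), (2), and (3) tracks the paper's: use Scott's $\Z$--Gassman triple $(\PSL(2,\F_{29}),P_1,P_2)$, use largeness to surject a finite-index subgroup of $\pi_1(M)$ onto the relevant finite group, pull back $P_1,P_2$ to $\Z$--equivalent subgroups of $\pi_1(M)$, and deduce (1)--(2) from Sunada and (3) from Shapiro's lemma applied to $H^*(\Gamma_i;\Z) = H^*(\pi_1(M); \Z[\pi_1(M)/\Gamma_i])$ and the $\Z[G]$-module isomorphism $\Z[G/P_1]\cong\Z[G/P_2]$. Your normalization concern for (3) is legitimate and resolvable essentially as you say: any $\Z[G]$-isomorphism of the permutation modules scales the unique $G$-invariant line by a unit of $\Z$ and the coinvariants likewise, and $[G:P_1]=[G:P_2]$ forces these units to agree, so after multiplying by $\pm 1$ the isomorphism intertwines both the coaugmentation (controlling $\Res$) and the augmentation (controlling $\Cor$). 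The paper handles this more tersely via ``naturality of Shapiro,'' so your version is arguably cleaner on this point. Also, to get $j$ many rather than two, the paper's mechanism is concrete: surject the free group onto $Q^{\beta}$ (via P.~Hall's theorem on independent surjections onto a finite simple group), and use $P_z = \prod_i P_{z_i}$ for $z\in\{1,2\}^{\beta}$, giving $2^{\beta}$ pairwise non-conjugate, pairwise $\Z$-equivalent subgroups of $Q^{\beta}$. Your ``iterated fiber products'' is gesturing at the same idea but is left vague.

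The genuine gap is non-isometry. You arrange for the $\Gamma_i$ to be pairwise \emph{non-conjugate} in $\pi_1(M)$, but for $n\ge 3$ Mostow rigidity says $M_i$ and $M_{i'}$ are isometric iff $\Gamma_i\cong\Gamma_{i'}$ as abstract groups, which is strictly weaker than conjugacy in $\pi_1(M)$. Sunada's construction alone never rules out abstract isomorphism of the covering groups, and indeed you never invoke the non-arithmeticity hypothesis at all — which should be a red flag, since it is an explicit assumption in the theorem. This is exactly where the paper's argument uses it: by Margulis, non-arithmeticity means $[\Comm(\Gamma):\Gamma]<\infty$, and this index $C_\Gamma$ bounds (via Mostow--Prasad) the number of non-conjugate finite-index subgroups of $\Gamma$ that are abstractly isomorphic to a fixed one. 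Producing exponentially many ($2^{\beta}$) pairwise non-conjugate, pairwise $\Z$-equivalent subgroups, while the redundancy from abstract isomorphism grows only linearly in the relevant parameter, then guarantees at least $j$ pairwise non-isomorphic ones. Without this counting argument (or some substitute), your construction does not deliver pairwise non-isometric covers, only pairwise non-conjugate ones, and the theorem as stated is not proved.
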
 

When $n \geq 3$, it follows from Mostow--Prasad rigidity (see \cite{Mostow}, \cite{GPra}) that $\pi_1(M_i),\pi_1(M_{i'})$ are non-isomorphic for $i \ne i'$. When manifolds $M_i,M_{i'}$ satisfy (1), they are referred to as \textbf{strongly isospectral}. When only $\mathcal{E}(M_i) = \mathcal{E}(M_{i'})$, the pair is said to be \textbf{isospectral}. Similarly, when (2) holds, the pair is said to be \textbf{length isospectral}. We note that for every $n \geq 2$, by work of Gromov--Piatetski-Shapiro \cite{GPS}, there are infinitely many commensurability classes of examples for which Theorem \ref{Sec:Intro:T1} can be applied. Moreover, being non-arithmetic or large are both commensurability invariants. 

\begin{rem}
The compatible isomorphism in singular cohomology with trivial $\Z$--coefficients is a special case of a more general result that relates the cohomology of manifolds $M_1,M_2$ that arise from this refined Gassman/Sunada construction; see Lemma \ref{Sec:Coho:L4} which also answers Question 2 in \cite{prasad}. In particular, there is a large class of coefficients for which compatible isomorphisms exist and the coefficients need not be trivial. 
\end{rem}

That one can construct non-isometric manifolds that satisfy (1) and (2) has been known since \cite{sunada}; see also \cite{LMNR} for a variant on \cite{sunada}. Additionally, it was known that when two manifolds arise from this construction, besides satisfying (1) and (2), they have $H^k(M_i,\Q) \cong H^k(M_{i'},\Q)$. However, it need not be the case that the pair have isomorphic integral cohomology. Bartel--Page \cite{BartelPage1} (see also \cite{BartelPage}) found examples of pairs arising from a Sunada construction which do not have isomorphic cohomology groups with coefficients in $\mathbf{F}_p$. Specifically, given a finite set of primes $S$, there exist strongly isospectral closed hyperbolic $3$--manifolds with non-isomorphic $\mathbf{F}_p$--cohomology for every $p \in S$ and isomorphic $\mathbf{F}_p$--cohomology for every $p \notin S$ (see \cite[Thm 1.2]{BartelPage1}). Also, Lauret--Miatello--Rossetti \cite{LMR} prove that strongly isospectral pairs need not have isomorphic cohomology rings.

By work of Agol \cite[Thm 9.2]{Agol}, every closed hyperbolic $3$--manifold is large; that hyperbolic surfaces are large is well known. As a result, we obtain the following corollary of Theorem \ref{Sec:Intro:T1}.

\begin{cor}\label{Sec:Intro:C1}
Let $M$ be a closed, non-arithmetic real hyperbolic $2$-- or $3$--manifold. Then for each $j \in \N$, there exist pairwise non-isometric, finite Riemannian covers $M_1,\dots,M_j$ of $M$ such that the following holds:
\begin{itemize}
\item[(1)]
$\mathcal{E}_k(M_i) = \mathcal{E}_k(M_{i'})$ for all $k$ and all $i,i'$.
\item[(2)]
$\mathcal{L}_p(M_i) = \mathcal{L}_p(M_{i'})$ for all $i,i'$.
\item[(3)]
There exist compatible isomorphisms $\psi_k\colon H^k(M_i,\Z) \to H^k(M_{i'}, \Z)$ for all $k$ and for all $i,i'$.
\end{itemize}
\end{cor}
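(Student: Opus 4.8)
The plan is to derive Corollary~\ref{Sec:Intro:C1} from Theorem~\ref{Sec:Intro:T1} by observing that the only hypothesis of the theorem that is not explicitly assumed in the corollary, namely largeness, is automatic for closed hyperbolic manifolds of dimension two and three. Non-arithmeticity is assumed in both statements, and ``real hyperbolic'' of dimension $2$ or $3$ is exactly a special case of the setting of Theorem~\ref{Sec:Intro:T1}; so once largeness is verified, conclusions (1)--(3) and the pairwise non-isometry are literally the conclusions of Theorem~\ref{Sec:Intro:T1} applied to $M$, with nothing further to check.

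For the dimension two case I would argue directly: a closed hyperbolic surface $M$ has $\pi_1(M)$ a closed surface group of negative Euler characteristic, and such a group surjects onto a non-abelian free group. For an orientable surface of genus $g \geq 2$ with the standard presentation $\langle a_1,b_1,\dots,a_g,b_g \mid \prod_{i=1}^g [a_i,b_i]\rangle$, killing the generators $a_2,b_2,\dots,a_g,b_g$ yields a free group of rank $2$; the non-orientable case is handled similarly. Hence $M$ is large already with $\Gamma_0 = \pi_1(M)$. For the dimension three case I would invoke \cite[Thm 9.2]{Agol}, which gives that every closed hyperbolic $3$--manifold is large. Having verified largeness in both cases, Theorem~\ref{Sec:Intro:T1} applies and produces the desired covers $M_1,\dots,M_j$.

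Consequently the corollary carries no independent difficulty: its entire content is packaged into Theorem~\ref{Sec:Intro:T1} together with Agol's theorem, both of which may be cited here. The only point requiring any care is to confirm that the form of largeness supplied by \cite[Thm 9.2]{Agol} (which ultimately rests on the virtual fibering and virtual specialness results for closed hyperbolic $3$--manifolds) matches the definition used in this paper, i.e.\ the existence of a finite-index subgroup admitting a surjection onto a non-abelian free group; this is standard in dimension three, and elementary in dimension two as indicated above. I therefore do not anticipate any genuine obstacle in writing out this deduction.
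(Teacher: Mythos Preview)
Your deduction is exactly the paper's: the corollary is stated immediately after observing that closed hyperbolic surfaces are large (well known) and that closed hyperbolic $3$--manifolds are large by Agol \cite[Thm 9.2]{Agol}, whence Theorem~\ref{Sec:Intro:T1} applies verbatim. One small slip: in your genus $g\geq 2$ argument, killing $a_2,b_2,\dots,a_g,b_g$ sends the relator $\prod_i[a_i,b_i]$ to $[a_1,b_1]$, so the quotient is $\Z^2$, not $F_2$; instead kill all the $b_i$ (the relator then dies) to get a surjection onto the free group $\langle a_1,\dots,a_g\rangle$ of rank $g\geq 2$.
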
 

When $n=2$, Corollary \ref{Sec:Intro:C1} is a small generalization of \cite{prasad}. In this setting, Borel \cite{borel} proved that there are only finite many non-isometric arithmetic hyperbolic surfaces of area at most $A$ for any $A>0$. In particular, for each genus $g \geq 2$, there are only finitely many points in $\mathcal{M}_g$, the moduli space of hyperbolic structures on $\Sigma_g$, that correspond to arithmetic hyperbolic structures. However, $\mathcal{M}_g$ has real dimension $6g-6$ and so we see that a typical hyperbolic structure on $\Sigma_g$ is non-arithmetic. A closed hyperbolic $3$--manifold is typically non-arithmetic as well. For each positive real number $V >0$, Borel \cite{borel} proved that there are only finitely many non-isometric arithmetic hyperbolic $3$--manifolds of volume at most $V$. However, it follows by work of Thurston that when $V$ is sufficiently large, there exist infinitely many closed hyperbolic $3$--manifolds of volume at most $V$. For instance, if $M_0$ is the complement of the figure-eight knot, for all but finitely many Dehn surgeries on $\partial M_0$, the resulting closed $3$--manifold will admit a complete hyperbolic structure by Thurston's Dehn Surgery theorem (see \cite{Thurston}). The figure-eight knot complement also admits a complete hyperbolic structure on its interior and the volumes of the closed hyperbolic manifolds obtained by Dehn surgery on $M_0$ are strictly smaller than $\mathrm{Vol}(M_0)$. Consequently, only finitely many of these closed hyperbolic $3$--manifolds can be arithmetic by Borel's finiteness theorem. For $n \geq 4$, the number of non-isometric, complete, finite volume hyperbolic $n$--manifolds of volume at most $V$ is finite by Wang \cite{Wang}. In this case, we can count the number of non-isometric complete, finite volume hyperbolic $n$--manifolds of volume at most $V$. Restricting to only the arithmetic or non-arithmetic manifolds, we obtain two counting functions and it is known that these functions have the same growth type (see \cite{GL} and the references therein for more on this topic). 

Returning to the main topic of this subsection, we end with another family of examples.

\begin{cor}\label{Sec:Intro:C2}
Let $M$ be a closed complex hyperbolic $2$--manifold that is non-arithmetic and large. Then for each $j \in \N$, there exist pairwise non-isometric, finite Riemannian covers $M_1,\dots,M_j$ of $M$ such that:
\begin{itemize}
\item[(1)]
$\mathcal{E}_k(M_i) = \mathcal{E}_k(M_{i'})$ for all $k$ and all $i,i'$.
\item[(2)]
$\mathcal{L}_p(M_i) = \mathcal{L}_p(M_{i'})$ for all $i,i'$.
\item[(3)]
There exist compatible isomorphisms $\psi_k\colon H^k(M_i,\Z) \to H^k(M_{i'},\Z)$ for all $k$ and for all $i,i'$.
\end{itemize}
\end{cor}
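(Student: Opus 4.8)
The plan is to observe that the proof of Theorem~\ref{Sec:Intro:T1} never uses the \emph{real} hyperbolic structure on $M$ in an essential way: it uses only that (a) $M$ is a closed Riemannian manifold of real dimension at least $3$; (b) $M$ carries a metric of strictly negative sectional curvature, so that each free homotopy class of loops has a unique closed geodesic and $\mathcal{L}_p(M)$ is well defined; (c) $\pi_1(M)$ is a non-arithmetic lattice in the isometry group of the universal cover, so that Margulis's characterisation of arithmeticity by density of the commensurator applies; and (d) $M$ is large. A closed complex hyperbolic $2$--manifold $M=\Gamma\backslash \mathbf{H}^2_{\mathbf{C}}$ has real dimension $4$, carries the Bergman metric, which is $\tfrac14$--pinched negatively curved, and $\Gamma$ is a lattice in $\mathrm{PU}(2,1)=\Isom(\mathbf{H}^2_{\mathbf{C}})$; the two remaining features, non-arithmeticity and largeness, are hypotheses of the corollary. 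So the strategy is simply to re-run the argument of Theorem~\ref{Sec:Intro:T1} verbatim in this setting, checking that each ingredient survives.

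First I would invoke largeness exactly as for Theorem~\ref{Sec:Intro:T1}: pass to a finite-index subgroup $\Gamma_0\le\Gamma$ admitting a surjection onto a non-abelian free group, hence onto any finite group, and in particular onto a finite group $G$ carrying $j$ pairwise non-conjugate, pairwise $\Z$--equivalent subgroups $P_1,\dots,P_j$ (produced from Scott's $\Z$--Gassman triple in $\PSL(2,\mathbf{F}_{29})$ by the same device used to pass from a single triple to a $j$--fold family in the proof of Theorem~\ref{Sec:Intro:T1}). Pulling the $P_i$ back along $\Gamma_0\twoheadrightarrow G$ and intersecting with $\Gamma_0$ gives finite-index subgroups $\Gamma_1,\dots,\Gamma_j\le\Gamma$ and corresponding finite Riemannian covers $M_1,\dots,M_j\to M$.

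Each conclusion is then inherited. For (1), $\Z$--equivalence of the $P_i$ gives $\mathbf{C}[G/P_i]\cong\mathbf{C}[G/P_{i'}]$ as $\mathbf{C}[G]$--modules, hence $\mathbf{C}[\Gamma/\Gamma_i]\cong\mathbf{C}[\Gamma/\Gamma_{i'}]$ as $\mathbf{C}[\Gamma]$--modules, and the Sunada/transplantation argument yielding isospectrality on every bundle $\Omega^k$ is purely representation-theoretic and indifferent to the geometry of $\mathbf{H}^2_{\mathbf{C}}$. Conclusion (2) is the same Sunada-type comparison of lengths of closed geodesics, which uses only feature (b). Conclusion (3) follows from $\Z$--equivalence of the $P_i$ together with Lemma~\ref{Sec:Coho:L4}, a statement about covers arising from a $\Z$--Gassman triple that makes no reference to the ambient geometry. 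Finally the $M_i$ are pairwise non-isometric: since $\dim_{\R}M_i=4\ge3$, Mostow--Prasad rigidity upgrades any isometry $M_i\to M_{i'}$ to an element of $\Isom(\mathbf{H}^2_{\mathbf{C}})$ conjugating $\Gamma_i$ to $\Gamma_{i'}$; non-arithmeticity of $\Gamma$ forces such an element into the commensurator of $\Gamma$, which by Margulis is itself a lattice, and pushing this back down contradicts the non-conjugacy of the $P_i$ in $G$ — precisely the rigidity argument already used for $n\ge3$ in Theorem~\ref{Sec:Intro:T1}.

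The only points that require verification rather than quotation are that the three classical inputs behave identically in $\mathrm{PU}(2,1)$: Mostow strong rigidity (valid for all rank-one symmetric spaces other than $\mathbf{H}^2_{\R}$, hence in real dimension $4$), Margulis's arithmeticity criterion (valid for all semisimple Lie groups, in particular $\mathrm{PU}(2,1)$), and Sunada transplantation (valid for arbitrary closed Riemannian manifolds). All three hold, so there is no genuine obstacle; the real \emph{content} of the corollary is the remark that largeness, which is automatic in the real hyperbolic $2$-- and $3$--dimensional cases of Corollary~\ref{Sec:Intro:C1}, must here be imposed as a hypothesis, since largeness is not known for every non-arithmetic lattice in $\mathrm{PU}(2,1)$.
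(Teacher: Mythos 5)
Your overall strategy is the paper's: the paper simply observes that Proposition~\ref{TechConProp} (and hence the proof of Theorem~\ref{Sec:Intro:T1}) applies verbatim to non-arithmetic, large lattices in $\SU(2,1)$, since Mostow--Prasad rigidity, the Margulis commensurator criterion, and the Sunada machinery all hold in that setting. Your checklist of which geometric inputs are needed and why they survive in $\mathrm{PU}(2,1)$ is accurate.

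However, there is a genuine gap in the step where you conclude the covers are non-isometric. You construct exactly $j$ pairwise non-conjugate $\Z$--equivalent subgroups $P_1,\dots,P_j\leq G$, pull them back to $\Gamma_1,\dots,\Gamma_j\leq\Gamma_0$, and then claim that an isometry $M_i\to M_{i'}$ would, via Mostow--Prasad, produce a conjugation $g\Gamma_i g^{-1}=\Gamma_{i'}$ and that ``pushing this back down contradicts the non-conjugacy of the $P_i$ in $G$.'' This does not follow: the element $g$ lies in $\Comm(\Gamma)$ but has no reason to lie in $\Gamma_0$, so it need not descend through the quotient $\Gamma_0\twoheadrightarrow G$ to an inner automorphism of $G$. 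Non-conjugacy of the $P_i$ in $G$ gives non-conjugacy of the $\Gamma_i$ in $\Gamma_0$, which is weaker than non-isomorphism as abstract groups. The paper handles this with a counting argument: by Mostow--Prasad plus Margulis, the number of $\Gamma_0$--conjugacy classes of subgroups abstractly isomorphic to a fixed finite-index $\Delta\leq\Gamma_0$ is bounded by $C_{\Gamma_0}=[\Comm(\Gamma):\Gamma_0]$, so one must manufacture $2^{\beta_{r,Q}}\geq j\cdot C_{\Gamma_r}$ pairwise non-conjugate $\Z$--equivalent subgroups (using $P_z=\prod P_{z_i}\leq Q^{\beta_{r,Q}}$ together with Hall's theorem to get a surjection onto a large power of $Q$) and then extract $j$ pairwise non-isomorphic ones by pigeonhole. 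Only after securing non-isomorphism does rigidity deliver non-isometry. Your proposal omits this counting step and would not, as written, rule out that two of your $M_i$ are isometric.
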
 

By work of Deligne--Mostow \cite{DM}, there are commensurability classes of complex hyperbolic 2--manifolds for which Corollary \ref{Sec:Intro:C2} can be applied. At present, there are only finitely many known commensurability classes of non-arithmetic complex hyperbolic 2--manifolds; see \cite{DPP} for more on this topic. 

\subsection{Algebro-geometric results and examples}\label{Sec:AGresults}

We now describe some results that relate various algebro-geometric invariants for pairs of smooth projective varieties that are constructed via $R$--equivalence for certain rings $R$. The examples from Corollary \ref{Sec:Intro:C2} provide non-trivial examples of such pairs for $R=\Z$. Large families of examples of non-isomorphic smooth projective varieties for $R=\Q$ were constructed in \cite[Thm 1.1]{McR}. These examples arise in all possible dimensions and the universal cover of these examples can be taken to be any irreducible, non-compact Hermitian symmetric space. 

For a field $K \subseteq \C$, we denote the category of smooth projective varieties over $K$ by $\Var_K$. The set of complex points $X(\C)$ of a variety $X\in \Var_K$ can be regarded as a complex manifold. These spaces carry the usual topological invariants such as the (topological) fundamental group or singular cohomology. However, the singular cohomology of an algebraic variety $X\in \Var_K$ is endowed with more structure than just an abelian group. Hodge theory provides the singular cohomology groups with a canonical decomposition $H^i(X,\Z)\otimes \C= \bigoplus_{p+q=i}H^{pq}$ such that $\overline{H^{pq}} = H^{qp}$ (see \cite{voisin} for instance). Such a decomposition is referred to as a \textbf{Hodge structure}. The subspace $H^{pq}$ can be defined as the space of de Rham cohomology classes represented by closed complex valued differential forms of type $(p,q)$. If $X$ is equipped with a K\"ahler metric, then $H^{pq}$ is isomorphic to the space of harmonic $(p,q)$--forms. For $k$ odd, the Hodge structure can be used to construct a complex structure on the real torus $H^k(X(\C),\R)/H^k(X(\C),\Z)$ which turns it into a complex torus called the \textbf{Griffiths intermediate Jacobian}. When $k=1, 2\dim_\C(X)-1$, these tori are in fact abelian varieties called the \textbf{Picard} and \textbf{Albanese varieties} of $X$. Setting $\Q_p$ to be the field of $p$--adic numbers and $\Z_p$ to be the ring of $p$--adic integers, via the comparison isomorphisms with \'etale cohomology (see \cite{milne}), we have
\begin{align*}
H^k(X(\C),\Z_p) &\cong H_{et}^k(X_{\overline{K}},\Z_p) :=\varprojlim_n H_{et}^k(X_{\overline{K}},\Z/(p^n))  , \\
H^k(X(\C),\Q_p) &\cong H_{et}^k(X_{\overline{K}},\Q_p) := H_{et}^k(X_{\overline{K}},\Z_p)\otimes \Q_p.
\end{align*}
where $\overline{K}$ denotes the algebraic closure of $K$ and $X_{\overline{K}}= X\times_{\spec K} \spec{\overline{K}}$. The \'etale cohomology groups carry natural $\Gal(\overline{K}/K)$--actions which encode important arithmetic information about $X$. When $K$ is a number field, these Galois modules determine the Hasse--Weil zeta function (see \cite{serre}). At a more basic level, the fundamental homological invariant of a variety is its motive; see \S \ref{Sec:motives} for more details. Rational cohomology with its Hodge  structure or $\Q_p$--\'etale cohomology with its Galois action depend only on  the motive. 

\begin{thm}\label{thm:cohXi}
Suppose $(G,P_1,P_2)$ is an $R$--Gassman triple, $p\colon X\to Y$ is a Galois \'etale cover with $X,Y \in \Var_K$ and with Galois group $G$, and $X_i= X/P_i$ for $i=1,2$.
\begin{enumerate}
\item[(1)]   
If $K=\C$, then there is an $R$--module isomorphism of singular cohomology groups $H^i(X_1(\C),R)\cong H^i(X_2(\C),R)$. If $R=\Z$ (resp.~$\Q$), then the isomorphism respects the canonical integral (resp.~rational) Hodge structures. In particular, the intermediate Jacobians of $X_i$ are isomorphic (resp.~isogenous).

\item [(2)]
If $R=\Z_p$ (resp. $\Q_p$) and $\overline{K}$ is the algebraic closure of $K$, then there is a $\Gal(\overline{K}/K)$--equivariant isomorphism of \'etale cohomology $H_{et}^*(X_{1,\overline{K}},\Z_p)\cong H_{et}^*(X_{2,\overline{K}},\Z_p)$ (resp. $H_{et}^*(X_{1,\overline{K}},\Q_p)\cong H_{et}^*(X_{2,\overline{K}},\Q_p)$). 

\item[(3)]
If $R=\Q$, then the effective Chow motives $M(X_i)$ of $X_i$  are isomorphic.
\end{enumerate}
\end{thm}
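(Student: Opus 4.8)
The plan is to treat all three parts by a single mechanism. Since $p\colon X\to Y$ is Galois with group $G$, the intermediate quotient $X_i=X/P_i$ is precisely the cover of $Y$ attached to the $G$-set $G/P_i$, and the group $P_i$ acts freely on $X$ (the cover is \'etale), so $X_i$ is again smooth projective and every ``linear'' invariant of $X_i$ — its cohomology with various coefficient rings, its \'etale cohomology, its Chow motive — is obtained from the corresponding invariant of $X$ by ``taking $P_i$-invariants'', i.e.\ by applying $\Hom_{R[G]}(R[G/P_i],-)$. An $R[G]$-module isomorphism $R[G/P_1]\cong R[G/P_2]$ (the $R$-Gassman hypothesis) will then transport the invariant of $X_1$ to that of $X_2$. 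The real work is to install this with the correct coefficients and to check that the resulting isomorphism respects the extra structure present, rather than only the underlying module.

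For parts (1) and (2) I would argue as follows. Write $f_i\colon X_i\to Y$ for the structure map, which is finite \'etale; we may assume $Y$ (hence $X$) connected. Because $f_i$ is finite, $Rf_{i*}=f_{i*}$ and the Leray spectral sequence degenerates, so $H^j(X_i,R)\cong H^j(Y,f_{i*}\underline R)$ for every coefficient ring $R$, and likewise $H_{et}^*(X_{i,\overline K},\Z_p)\cong H_{et}^*(Y_{\overline K},f_{i*}\Z_p)$. The sheaf $f_{i*}\underline R$ is the local system on $Y(\C)$ (resp.\ the lisse sheaf on $Y$ over $K$) with monodromy representation $\pi_1(Y)\twoheadrightarrow G\to\GL(R[G/P_i])$, so an $R[G]$-isomorphism $R[G/P_1]\cong R[G/P_2]$ is an isomorphism of these two sheaves; applying $H^j(Y,-)$ (resp.\ $H_{et}^*(Y_{\overline K},-)$) gives $H^j(X_1,R)\cong H^j(X_2,R)$, and $\Gal(\overline K/K)$-equivariantly in the \'etale case because the sheaf isomorphism is already defined over $K$ (the $G$-action is) while $H_{et}^*(Y_{\overline K},-)$ is a functor to $\Gal(\overline K/K)$-modules; tensoring with $\Q_p$ gives the $\Q_p$-statement. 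For the Hodge refinement with $R=\Q$, rewrite $H^j(X_i,\Q)\cong\Hom_{\Q[G]}(\Q[G/P_i],H^j(X,\Q))$ by Frobenius reciprocity: the $G$-action on $X$ makes $H^j(X,\Q)$ a $\Q[G]$-module in the category of rational Hodge structures, and $\Q[G/P_i]$ carries the trivial weight-zero structure, so $\Hom_{\Q[G]}(\Q[G/P_i],-)$ lands in rational Hodge structures and the transported isomorphism respects them. For $R=\Z$, the isomorphism coming from the local systems becomes this rational one after $\otimes\Q$, and an integral Hodge structure is nothing but a finitely generated abelian group with a rational (equivalently complex) Hodge structure, so the isomorphism is one of integral Hodge structures. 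The assertions about intermediate Jacobians — in particular Picard and Albanese varieties — then follow because those complex tori are functors of the (integral, resp.\ rational) Hodge structure on odd-degree cohomology.

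For part (3) I would work in the $\Q$-linear pseudo-abelian category $\mathcal M$ of Chow motives over $K$, with its full subcategory of effective motives. The $G$-action on $X$ by automorphisms gives a ring homomorphism $\rho\colon\Q[G]\to\End_{\mathcal M}(M(X))$, $g\mapsto[\Gamma_g]$ (graph correspondences), and the standard description of the motive of a free quotient by a finite group — which uses $\Q$-coefficients to divide by $|P_i|$ — identifies $M(X_i)$ with the summand $\rho(\varepsilon_i)M(X)$ cut out by the idempotent $\rho(\varepsilon_i)$, where $\varepsilon_i=\tfrac1{|P_i|}\sum_{g\in P_i}g\in\Q[G]$. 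On the algebra side, $\Q[G/P_i]\cong\Q[G]\varepsilon_i$ as left $\Q[G]$-modules, and any left-$\Q[G]$-module isomorphism $\Q[G]\varepsilon_1\to\Q[G]\varepsilon_2$ is right multiplication by an element $a\in\varepsilon_1\Q[G]\varepsilon_2$ whose inverse is right multiplication by some $b\in\varepsilon_2\Q[G]\varepsilon_1$, with $ab=\varepsilon_1$ and $ba=\varepsilon_2$. Applying $\rho$, the morphisms $\rho(a)$ and $\rho(b)$ restrict to mutually inverse maps between $\rho(\varepsilon_1)M(X)$ and $\rho(\varepsilon_2)M(X)$, so $M(X_1)\cong M(X_2)$ in $\mathcal M$; as both are effective and effective motives form a full subcategory, this is an isomorphism of effective Chow motives. (The rational part of (1) and the $\Q_p$-part of (2) can alternatively be read off from (3) via the Betti and $p$-adic \'etale realization functors, which respect Hodge structures and Galois actions; the integral refinements cannot, which is why the sheaf argument above is still needed.)

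The hard part will be the motivic descent in part (3): establishing that $M(X/P_i)$ is the image of the averaging idempotent $\rho(\varepsilon_i)$ inside $M(X)$, and then converting the abstract module isomorphism $\Q[G/P_1]\cong\Q[G/P_2]$ into an honest isomorphism of the two summands — conceptually, a conjugacy of idempotents in $\End_{\mathcal M}(M(X))$. Both ingredients rely on $\Q$-coefficients in an essential way (one divides by $|P_i|$), which is exactly why part (3) does not upgrade to $\Z$-coefficients, and why the integral statements in (1)–(2) genuinely require the separate sheaf-theoretic argument rather than following formally from it.
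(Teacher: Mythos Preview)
Your proposal is correct and follows essentially the same route as the paper: for (1) and (2) you push the coefficients down to $Y$, identify $(f_i)_*\underline{R}$ with the local system (resp.\ lisse sheaf) attached to $R[G/P_i]$, and use the $R[G]$-isomorphism of permutation modules to transport cohomology; for (3) you identify $M(X_i)$ with the summand of $M(X)$ cut out by the averaging idempotent $\varepsilon_i$ and then convert the $\Q[G]$-module isomorphism into an isomorphism of those summands. The one place where you diverge slightly is the integral Hodge refinement: the paper observes that $(f_i)_*\underline{\Z}$ is naturally a variation of Hodge structure of type $(0,0)$, so the sheaf isomorphism is automatically a VHS isomorphism and the induced map on $H^k(Y,-)$ respects Hodge structures; you instead argue that the integral isomorphism, after $\otimes\,\Q$, coincides with the Frobenius-reciprocity isomorphism on $H^j(X,\Q)^{P_i}$, which visibly preserves the Hodge decomposition. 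Your route avoids invoking VHS machinery but requires you to check (which you assert rather than verify) that the two descriptions of the rational map agree---this is true by naturality of Leray and of the sheaf/module correspondence, but deserves a sentence.
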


\begin{rem}
The last statement of case (1), when $\dim X_i=1$, is due to Prasad \cite{prasad}. In case (2) and $R=\Q_p$, this result is due to Prasad--Rajan \cite{pr}, who also observed that this implies that the Hasse--Weil zeta functions agree when $K$ is a number field. Note that case (3) actually implies the previous two statements when $\Q \subseteq R$.
\end{rem}

Combining this theorem with  (the proof of) Corollary \ref{Sec:Intro:C2} yields:

\begin{thm}\label{prop:ZequivVar}
Fix an embedding $\overline{\Q} \subset \C$. Then for every $j \in \N$, there exists smooth projective surfaces $X_1,\ldots, X_j$ defined over $\overline{\Q}$ such that
 
\begin{enumerate}
\item[(1)]  $H^k(X_i(\C),\Z)\cong H^k(X_{i'}(\C),\Z)$ as Hodge structures for all $k$ and all $i,i'$. 
\item[(2)] $H_{et}^k(X_i,\Z_p)\cong H_{et}^k(X_{i'},\Z_p)$ as Galois modules for all $k$ and all $i,i'$. 
\item[(3)] The motives $M(X_i)\cong  M(X_{i'})$ all $i,i'$. 
\item[(4)] The topological fundamental groups of $X_i(\C)$ are pairwise non-isomorphic.
\end{enumerate}
\end{thm}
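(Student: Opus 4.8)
The plan is to carry out the construction underlying Corollary~\ref{Sec:Intro:C2} over a number field and then apply Theorem~\ref{thm:cohXi} once for each of $R=\Z$, $R=\Z_p$, and $R=\Q$. Fix $j\in\N$. First I would choose the base surface: by Deligne--Mostow \cite{DM} there is a closed, non-arithmetic, large complex hyperbolic $2$--manifold $M$; since compact quotients of $\Hy^2_{\C}$ are infinitesimally rigid as complex manifolds (Calabi--Vesentini) and a rigid smooth projective variety admits a model over $\overline{\Q}$, we may write $M=S(\C)$ for a smooth projective surface $S$ defined over a number field $K\subset\overline{\Q}$. Next I would import the group-theoretic and covering data from the proof of Corollary~\ref{Sec:Intro:C2}: Scott's non-conjugate $\Z$--equivalent pair in $\PSL(2,\mathbf{F}_{29})$ yields, after passing to a suitable power, a finite group $G$ with subgroups $P_1,\dots,P_j\le G$ that are pairwise $\Z$--equivalent and pairwise non-conjugate, and --- using largeness of $M$ to produce a finite-index subgroup of $\pi_1(M)$ surjecting onto $G$ --- a finite Galois \'etale cover that realizes the pairwise non-isometric covers of that corollary as $M_i=X_i(\C)$ with $X_i=X/P_i$. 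Enlarging $K$ so that the Galois cover $p\colon X\to Y$ and the $G$--action descend to $K$, I would then have $p\colon X\to Y$ in $\Var_K$, Galois with group $G$, and each $X_i=X/P_i$ a smooth projective surface in $\Var_K$ (the $P_i$ acting freely on $X$).

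With the tower in hand, the three isomorphism statements are formal. Since $\Z$--equivalence is preserved by $-\otimes_{\Z}\Z_p$ and by $-\otimes_{\Z}\Q$, the triple $(G,P_i,P_{i'})$ is simultaneously a $\Z$--, $\Z_p$--, and $\Q$--Gassman triple. Theorem~\ref{thm:cohXi}(1) with $R=\Z$, applied after base change to $\C$, gives the integral-Hodge-structure isomorphisms $H^k(X_i(\C),\Z)\cong H^k(X_{i'}(\C),\Z)$ of (1); Theorem~\ref{thm:cohXi}(2) with $R=\Z_p$ over $K$ gives the $\Gal(\overline{K}/K)$--equivariant isomorphisms $H_{et}^k(X_{i,\overline{K}},\Z_p)\cong H_{et}^k(X_{i',\overline{K}},\Z_p)$ of (2) for every $p$; and Theorem~\ref{thm:cohXi}(3) with $R=\Q$ gives $M(X_i)\cong M(X_{i'})$ of (3). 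For (4), each $X_i(\C)$ is a closed complex hyperbolic $2$--manifold, hence aspherical with contractible universal cover, so an abstract isomorphism $\pi_1(X_i(\C))\cong\pi_1(X_{i'}(\C))$ would be induced by a homotopy equivalence; by Mostow rigidity for cocompact lattices in $\PU(2,1)$ (\cite{Mostow}) such a homotopy equivalence is homotopic to an isometry, contradicting the pairwise non-isometry of the $X_i(\C)$ furnished by Corollary~\ref{Sec:Intro:C2}.

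The step I expect to be the main obstacle is the descent of the base to $\overline{\Q}$ --- ensuring that a non-arithmetic, large, closed complex hyperbolic surface really is the complex locus of a variety over a number field. The route I have in mind combines infinitesimal rigidity of compact ball quotients of dimension at least two with the standard fact that rigid smooth projective varieties have models over $\overline{\Q}$; alternatively one can use directly that the Deligne--Mostow ball quotients arise from moduli of weighted point configurations on $\PP^1$, which are defined over number fields, together with the fact that finite \'etale covers of $\overline{\Q}$--varieties remain $\overline{\Q}$--varieties. The only other point needing care is bookkeeping with fields of definition in (2): the $X_i$ are genuinely defined over a number field $K\subset\overline{\Q}$, and the Galois-module assertion is to be read with respect to $\Gal(\overline{K}/K)$; everything else is a formal combination of Corollary~\ref{Sec:Intro:C2} and Theorem~\ref{thm:cohXi}.
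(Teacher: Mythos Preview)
Your proposal is correct and follows essentially the same route as the paper: Deligne--Mostow non-arithmetic large complex hyperbolic surfaces, the $\Z$--equivalent covers from Proposition~\ref{TechConProp}/Corollary~\ref{Sec:Intro:C2}, Calabi--Vesentini rigidity for descent to $\overline{\Q}$, and Theorem~\ref{thm:cohXi} for parts (1)--(3). The only cosmetic difference is the order of descent: you descend the base $M$ to $\overline{\Q}$ first and then argue that the finite \'etale tower descends, whereas the paper builds the quotients $X_i=B/\Delta_i$ over $\C$ and applies Calabi--Vesentini rigidity to each $X_i$ directly; likewise, for (4) the paper simply reads off $\pi_1(X_i)=\Delta_i\ncong\Delta_{i'}$ from Proposition~\ref{TechConProp}, while you reach the same conclusion via Mostow rigidity applied to the non-isometric covers.
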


In \S \ref{Sec:motives}, we will use Theorem \ref{prop:ZequivVar} to show the non-injectivity of the map from the Grothendieck group of varieties over $\overline{\Q}$ to the Grothendieck group of the category of effective Chow motives (see Theorem \ref{Sec:Motive:NonInject}). The differences $[X_i]-[X_{i'}]$ will give nonzero elements in the kernel.

\paragraph{\textbf{Acknowledgments.}} The authors would like to thank Nick Miller, Deepam Patel, Alan Reid, and Matthew Stover for conversations on the topics in this paper. We would also like to thank the referee for helpful comments that helped improve the clarify of the article. The first author was partially supported by an NSF grant. The third author was partially supported by NSF grant DMS-1408458.

\section{Preliminaries}

Real and complex hyperbolic $n$--space are examples of symmetric spaces of non-compact type. We refer the reader to \cite{Goldman} and \cite{Ratcliffe} for a thorough introduction to these spaces. The isometry group $\Isom(\Hy_\R^n)$ of real hyperbolic $n$--space $\Hy_\R^n$ is isogenous to the subgroup $\SO(n,1)$ of $\SL(n+1,\R)$ that preserves the bilinear form
\[ B_{n,1}(x,y) = -x_{n+1}y_{n+1} + \sum_{j=1}^n x_jy_j. \]
Given a discrete subgroup $\Gamma \leq \Isom(\Hy_\R^n)$, the quotient space $\Hy_\R^n/\Gamma$ is a real hyperbolic $n$--orbifold. When $\Gamma$ is torsion free (i.e. contains no non-trivial elements of finite order), the quotient space is a complete, real hyperbolic $n$--manifold. We say that $\Gamma$ is a \textbf{lattice} if $\Hy_\R^n/\Gamma$ has finite volume. If $\Hy_\R^n/\Gamma$ is also compact, we say that $\Gamma$ is \textbf{cocompact}. Conversely, given a complete, finite volume real hyperbolic $n$--manifold $M$, via the action of $\pi_1(M)$ on the universal cover $\Hy_\R^n$, we obtain an injective homomorphism $\pi_1(M) \to \Isom(\Hy_\R^n)$. The image under this representation is a lattice. We note that because this representation depends on the choice of a lift $\widetilde{p} \in \Hy_\R^n$ of the base point $p \in M$, this representation is unique only up to conjugation in $\Isom(\Hy_\R^n)$. 

The isometry group $\Isom(\Hy_\C^n)$ of complex hyperbolic $n$--space $\Hy_\C^n$ is isogenous to the subgroup $\SU(n,1)$ of $\SL(n+1,\C)$ that preserves the hermitian form
\[ H_{n,1}(w,z) = -w_{n+1}\overline{z_{n+1}} + \sum_{j=1}^n w_j\overline{z_j}. \]
Complex hyperbolic $n$--manifolds and orbifolds are constructed similarly to those in the real hyperbolic setting but taking discrete subgroups of $\Isom(\Hy_\C^n)$. One important difference between real and complex hyperbolic $n$--manifolds that will be relevant is the existence of complex projective structures. First, a complex hyperbolic $n$--manifold is a complex manifold of real dimension $2n$. Due to an exceptional isogeny between $\SL(2,\R)$ and $\SU(1,1)$, real hyperbolic $2$--manifolds coincide with complex hyperbolic $1$--manifolds. In particular, real hyperbolic $2$--manifolds come with a natural complex structure. For all $n>2$, real hyperbolic $n$--manifolds are not naturally complex. When $\Gamma$ is a torsion free cocompact lattice in $\SU(n,1)$, the associated complex hyperbolic $n$--manifold is a non-singular, complex projective algebraic variety. 

Taking $\mathrm{G}$ to be either $\Isom(\Hy_\R^n)$ or $\Isom(\Hy_\C^n)$, given a pair of subgroups $\Gamma_1,\Gamma_2 \leq \mathrm{G}$, we say that $\Gamma_1,\Gamma_2$ are \textbf{commensurable} if $\Gamma_1 \cap \Gamma_2$ is a finite index subgroup of both $\Gamma_1,\Gamma_2$. We define the commensurator of $\Gamma$ in $\mathrm{G}$ to be the subgroup
\[ \Comm(\Gamma) = \set{g \in \mathrm{G}~:~g^{-1}\Gamma g, \Gamma \textrm{ are commensurable}}. \]
One sees that $\Gamma \leq \Comm(\Gamma)$. It follows from work of Margulis \cite[Thm 1, pp.~2]{Margulis} that either $[\Comm(\Gamma):\Gamma] < \infty$ or $\Comm(\Gamma)$ is dense in $\mathrm{G}$ in the analytic topology. When $[\Comm(\Gamma):\Gamma]$ is finite, we say that $\Gamma$ is \textbf{non-arithmetic} and when $\Comm(\Gamma) \leq \mathrm{G}$ is dense, we say that $\Gamma$ is \textbf{arithmetic}. 

We end this section with a short remark concerning the cohomology/homology of $\Gamma$ and its associated real or complex hyperbolic manifold. When $\Gamma$ is discrete and torsion free, the associated manifold $M = \Hy_\R^n/\Gamma$ or $\Hy_\C^n/\Gamma$ is a $K(\Gamma,1)$--space for $\Gamma$ since $\Hy_\R^n,\Hy_\C^n$ are contractible. As a result, we can establish the cohomology isomorphisms for the spaces by establishing them for the cohomology of the associated lattices. 

\section{Isomorphisms in group cohomology}

In this section, we record some basic results that relate the group cohomology of $\Z$--equivalent subgroups of finite and infinite groups. We refer the reader to \cite{Brown} for a more complete treatment of group cohomology.

Given a group $G$ and a subgroup $P \leq G$, we denote the restriction functor by $\Res^G_P$. Restriction has left and right adjoints given by the induction and co-induction functors $\Ind_P^G,\CoInd_P^G$. Explicitly, for a $\Z[P]$--module $A$, the underlying modules are $\Ind_P^G(A) =\Z[G] \otimes_{\Z[P]} A$ and $\CoInd_P^G(A) =\Hom_{\Z[P]}(\Z[G],A)$, with respective $G$ actions given by the $\Z$--linear extensions of $g\cdot (x\otimes a)=gx\otimes a$ and $(g\cdot \phi)(x)= \phi(xg)$. 

We start with a pair of well known results.
	
\begin{lemma}\label{Sec:Coho:L1}
If $G$ is a group and $P \leq G$ is finite index, then induction and co-induction are isomorphic as $\Z[G]$--modules. 
\end{lemma}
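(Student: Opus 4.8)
The plan is to exhibit an explicit $\Z[G]$-module isomorphism $\Ind_P^G(A) \to \CoInd_P^G(A)$ when $[G:P] < \infty$. First I would fix a set of left coset representatives $g_1, \dots, g_n$ for $P$ in $G$, so that $G = \bigsqcup_{i=1}^n g_i P$; here $n = [G:P] < \infty$ is what makes the construction work. The key point is that finiteness of the index lets us average/sum over cosets, which is exactly the device that identifies a "tensor up" with a "Hom up."

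The main step is to write down the map. Given $A$ a $\Z[P]$-module, I would define $\Phi \colon \Ind_P^G(A) = \Z[G] \otimes_{\Z[P]} A \to \CoInd_P^G(A) = \Hom_{\Z[P]}(\Z[G], A)$ on generators by sending $g \otimes a$ to the $\Z[P]$-homomorphism $\phi_{g,a}$ determined by
\[
\phi_{g,a}(x) = \begin{cases} p\cdot a & \text{if } xg = p \in P, \\ 0 & \text{if } xg \notin P. \end{cases}
\]
One must check this is well-defined (compatible with the relation $gp' \otimes a = g \otimes p'a$ for $p' \in P$), that the image genuinely lands in $\Hom_{\Z[P]}(\Z[G],A)$ (i.e. $\phi_{g,a}(p'x) = p'\phi_{g,a}(x)$ for $p' \in P$), that $\Phi$ is $\Z[G]$-linear with respect to the stated actions $g\cdot(x\otimes a) = gx \otimes a$ and $(g\cdot \phi)(x) = \phi(xg)$, and finally that $\Phi$ is a bijection. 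For bijectivity I would exhibit the inverse directly: send $\phi \in \Hom_{\Z[P]}(\Z[G],A)$ to $\sum_{i=1}^n g_i^{-1} \otimes \phi(g_i)$, and verify this is independent of the choice of coset representatives (replacing $g_i$ by $g_i p$ with $p \in P$ changes $g_i^{-1} \otimes \phi(g_i)$ to $p^{-1}g_i^{-1} \otimes \phi(g_i p) = p^{-1}g_i^{-1}\otimes p\,\phi(g_i) = g_i^{-1}\otimes\phi(g_i)$, using the tensor relation) and that it composes with $\Phi$ to the identity in both directions. This finite sum over cosets is precisely where the hypothesis $[G:P]<\infty$ is essential.

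The step I expect to be the main (if modest) obstacle is the careful bookkeeping of the two $G$-actions under $\Phi$: checking $\Phi(h\cdot(g\otimes a)) = h\cdot \Phi(g\otimes a)$ unwinds to comparing $\phi_{hg,a}(x)$ with $\phi_{g,a}(xh)$, and one needs to track that $xhg \in P$ if and only if $(xh)g \in P$ with the same resulting value — trivial once set up, but easy to get off by an inverse or a left/right swap. Everything else is routine verification. Since the paper describes these as "well known results," I would keep the write-up terse: state the map $\Phi$ and its inverse, remark that well-definedness and $\Z[G]$-equivariance are straightforward checks using the coset decomposition, and note that the construction is natural in $A$ (it is in fact an isomorphism of functors), which is the form in which it will be applied later.
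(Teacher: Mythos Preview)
The paper does not prove this lemma at all: it is stated as one of ``a pair of well known results'' and left without proof, implicitly referring the reader to \cite{Brown}. Your explicit construction is exactly the standard argument (essentially \cite[III.5.9]{Brown}), so there is nothing to compare against.

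One small bookkeeping slip, of precisely the kind you anticipated: with the conventions in the paper ($\CoInd_P^G(A)=\Hom_{\Z[P]}(\Z[G],A)$, so $\phi(p'x)=p'\phi(x)$ for $p'\in P$), your inverse $\Psi(\phi)=\sum_i g_i^{-1}\otimes\phi(g_i)$ requires the $g_i$ to be \emph{right} coset representatives, not left. Indeed, $\Phi\circ\Psi(\phi)(x)=\phi(x)$ needs exactly one $i$ with $xg_i^{-1}\in P$, i.e.\ $x\in Pg_i$; and your well-definedness check uses $\phi(g_ip)=p\,\phi(g_i)$, which is false under left $P$-linearity --- the correct computation is that replacing $g_i$ by $pg_i$ gives $(pg_i)^{-1}\otimes\phi(pg_i)=g_i^{-1}p^{-1}\otimes p\,\phi(g_i)=g_i^{-1}\otimes\phi(g_i)$. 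With that adjustment (or by taking a common system of left and right representatives), everything goes through.
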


\begin{lemma}\label{Sec:Coho:L2}
If $G$ is a group, $P \leq G$ is of finite index, and $A$ is a $\Z[G]$--module, then $\CoInd_P^G(\Res_P^G(A))=A\otimes_{\Z} \Z[G/P]$.
\end{lemma}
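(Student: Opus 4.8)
The plan is to verify the claimed isomorphism $\CoInd_P^G(\Res_P^G(A)) = A \otimes_\Z \Z[G/P]$ directly from the definitions, producing an explicit $\Z[G]$--module map and checking it is bijective. First I would recall that as a $\Z$--module, $\CoInd_P^G(\Res_P^G(A)) = \Hom_{\Z[P]}(\Z[G], A)$, where $A$ is regarded as a $\Z[P]$--module by restriction, and the $G$--action is $(g\cdot\phi)(x) = \phi(xg)$. Fix a set of coset representatives $g_1,\dots,g_n$ for $P\backslash G$ (so $G = \coprod_i Pg_i$, using right cosets since $\Z[G]$ is a left $\Z[P]$--module and a right $\Z[G]$--module). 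A $\Z[P]$--linear map $\phi\colon \Z[G]\to A$ is determined freely by its values $\phi(g_i)\in A$, so the assignment $\phi \mapsto \sum_i \phi(g_i)\otimes (g_i^{-1}P)$ (or some bookkeeping variant thereof) gives a $\Z$--module isomorphism $\Hom_{\Z[P]}(\Z[G],A) \xrightarrow{\ \sim\ } A\otimes_\Z \Z[G/P]$. I need to be careful about left versus right cosets and about which side the representatives sit on so that the $G$--action matches; this is the one place where a sign/side error is easy, so I would set conventions once and stick to them.

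Next I would check $G$--equivariance. On the right-hand side, the $G$--action on $A\otimes_\Z\Z[G/P]$ is the diagonal one: $g\cdot(a\otimes hP) = (ga)\otimes(ghP)$, where $ga$ uses the full $\Z[G]$--module structure on $A$. On the left-hand side it is $(g\cdot\phi)(x)=\phi(xg)$. The key computation is that under the correspondence above, translating $\phi$ by $g$ permutes the coset representatives $g_i$ and simultaneously twists the recorded values $\phi(g_i)$ by the appropriate element of $P$ — and that element of $P$ acting on $a\in A$ together with the permutation of cosets reassembles exactly into the diagonal $G$--action. Concretely, writing $g_i g = p_{ij} g_j$ for the unique $p_{ij}\in P$ and index $j = j(i)$, one has $(g\cdot\phi)(g_i) = \phi(g_i g) = \phi(p_{ij}g_j) = p_{ij}\,\phi(g_j)$, and one checks this matches the $g$--translate of $\sum_j \phi(g_j)\otimes(g_j^{-1}P)$ after relabeling. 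This is a finite, mechanical verification.

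The main (and really the only) obstacle is purely bookkeeping: getting the coset conventions, the direction of the $G$--action on $\Z[G]$ inside $\CoInd$, and the diagonal action on the tensor product all to line up, so that the twisting elements $p_{ij}\in P$ act on $A$ through the restricted module structure in a way that is consistent with the ambient $\Z[G]$--structure. There is no hard idea here; it is a standard ``Frobenius reciprocity / projection formula'' type identity, sometimes phrased as $\CoInd_P^G(\Res_P^G(A)) \cong A\otimes_{\Z[P]}\Z[G]$ with the observation that when the coefficients are trivial as a $P$--module the tensor collapses to $\Z[G/P]$ — but here $A$ is not trivial, so I would instead present the direct evaluation-at-representatives argument above. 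I would also remark that combined with Lemma \ref{Sec:Coho:L1} this gives $\Ind_P^G(\Res_P^G(A)) \cong A\otimes_\Z\Z[G/P]$ as well, which is the form that feeds into the cohomology comparison for $R$--equivalent subgroups later.
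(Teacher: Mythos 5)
The paper states this lemma without proof, as a standard fact, so there is no argument of its own to compare against; I will assess your proposal on its merits. Your overall strategy is the right one — realize $\CoInd_P^G(\Res_P^G A)=\Hom_{\Z[P]}(\Z[G],A)$, decompose $\Z[G]=\bigoplus_i \Z[P]g_i$ using right coset representatives, and use evaluation at the $g_i$ — and you correctly flag that the only danger lies in the bookkeeping. But the explicit formula you wrote down, $\phi\mapsto\sum_i \phi(g_i)\otimes g_i^{-1}P$, is where the danger actually bites: once representatives are fixed this is a $\Z$--module bijection, but it is not $G$--equivariant. Tracing through your own computation shows this. Writing $g_ig=p_{i\sigma(i)}g_{\sigma(i)}$ gives $(g\cdot\phi)(g_i)=p_{i\sigma(i)}\phi(g_{\sigma(i)})$ and $g_i^{-1}P=gg_{\sigma(i)}^{-1}P$, so after relabeling your map sends $g\cdot\phi$ to $\sum_j p_{\sigma^{-1}(j),j}\,\phi(g_j)\otimes gg_j^{-1}P$, whereas the diagonal action on the target produces $\sum_j g\,\phi(g_j)\otimes gg_j^{-1}P$. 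Since $p_{\sigma^{-1}(j),j}=g_{\sigma^{-1}(j)}\,g\,g_j^{-1}\neq g$ in general, the two sides disagree; the residual factor $g_{\sigma^{-1}(j)}\cdots g_j^{-1}$ is exactly what does not cancel.

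The correct ``bookkeeping variant'' is $\Psi(\phi)=\sum_i g_i^{-1}\phi(g_i)\otimes g_i^{-1}P$. This version is independent of the choice of representatives (replacing $g_i$ by $pg_i$ multiplies $\phi(g_i)$ by $p$ and $g_i^{-1}$ by $p^{-1}$ on the left, which cancel), and the same bookkeeping you set up now closes: $g_i^{-1}p_{i\sigma(i)}=gg_{\sigma(i)}^{-1}$, so $\Psi(g\cdot\phi)=\sum_i gg_{\sigma(i)}^{-1}\phi(g_{\sigma(i)})\otimes gg_{\sigma(i)}^{-1}P=g\cdot\Psi(\phi)$. I would also note that your closing remark can be turned into a cleaner proof: it is slightly easier to prove the induced version $\Ind_P^G(\Res_P^G A)\cong A\otimes_\Z\Z[G/P]$ directly via the manifestly well-defined, equivariant map $g\otimes a\mapsto ga\otimes gP$ (inverse $a\otimes gP\mapsto g\otimes g^{-1}a$), and then cite Lemma \ref{Sec:Coho:L1} to convert to coinduction, rather than doing the coinduced side by hand.
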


We note that $P_{1},P_{2} \leq G$ are $\Z$--equivalent if and only if $\CoInd_{P_1}^G(\Res_{P_1}^G(1))$, $\CoInd_{P_2}^G(\Res_{P_2}^G(1))$ are isomorphic as $\Z[G]$--modules. Given a $\Z[G]$--module $A$, we say that a morphism $\psi_k\colon H^k(P_1,\Res_{P_1}^G(A)) \to H^k(P_2,\Res_{P_2}^G(A))$ is \textbf{compatible} if the diagram

\begin{equation}\label{Eq:Natural2}
\begin{tikzcd}
& H^k(G,A) \arrow[rd,"\Res", bend left = 30] \arrow[ld,"\Res"', bend right = 30] & \\ H^k(P_1,\Res_{P_1}^G(A)) \arrow[rr,"\psi_k"', bend right = 45]  \arrow[ru, "\CoInd"', bend right = 30]& & H^k(P_2,\Res_{P_2}^G(A)) \arrow[lu,"\CoInd", bend left = 30]
\end{tikzcd}
\end{equation}
commutes.

\begin{lemma} \label{Sec:Coho:L3}
Let $G$ be a finite group and $P_{1}, P_{2} \leq G$ be $\Z$--equivalent subgroups. Then for any $\Z[G]$--module $A$ and any nonnegative integer $k$, there is a compatible isomorphism $H^{k}(P_{1},\Res_{P_1}^G(A))\to H^{k}(P_{2},\Res_{P_2}^G(A))$.
\end{lemma}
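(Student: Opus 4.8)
The plan is to translate the $\Z$--equivalence hypothesis into an isomorphism of functors and then apply Shapiro's lemma. Recall that $P_1, P_2$ are $\Z$--equivalent means $\Z[G/P_1] \cong \Z[G/P_2]$ as $\Z[G]$--modules. First I would fix a $\Z[G]$--module $A$ and observe, using Lemma \ref{Sec:Coho:L2}, that
\begin{equation*}
\CoInd_{P_i}^G(\Res_{P_i}^G(A)) \cong A \otimes_{\Z} \Z[G/P_i]
\end{equation*}
as $\Z[G]$--modules. Tensoring the given $\Z[G]$--isomorphism $\Z[G/P_1] \cong \Z[G/P_2]$ with $A$ over $\Z$ therefore produces a $\Z[G]$--isomorphism
\begin{equation*}
\Theta \colon \CoInd_{P_1}^G(\Res_{P_1}^G(A)) \xrightarrow{\ \sim\ } \CoInd_{P_2}^G(\Res_{P_2}^G(A)).
\end{equation*}

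Next I would pass to cohomology. Applying $H^k(G,-)$ to $\Theta$ gives an isomorphism $H^k(G, \CoInd_{P_1}^G(\Res_{P_1}^G(A))) \cong H^k(G, \CoInd_{P_2}^G(\Res_{P_2}^G(A)))$, and by Shapiro's lemma (i.e.\ the fact that $H^k(G, \CoInd_P^G(B)) \cong H^k(P, B)$ naturally in $B$) this yields the desired isomorphism
\begin{equation*}
\psi_k \colon H^k(P_1, \Res_{P_1}^G(A)) \xrightarrow{\ \sim\ } H^k(P_2, \Res_{P_2}^G(A)).
\end{equation*}
The remaining — and genuinely substantive — point is to check that $\psi_k$ is \textbf{compatible} in the sense of diagram \eqref{Eq:Natural2}, i.e.\ that it intertwines the two co-restriction maps $\CoInd \colon H^k(G,A) \to H^k(P_i, \Res_{P_i}^G(A))$. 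Here I would use that the co-restriction map is exactly the composite of the map on $H^k(G,-)$ induced by the unit of adjunction (the natural inclusion $A \hookrightarrow \CoInd_{P_i}^G(\Res_{P_i}^G(A))$, which under the identification of Lemma \ref{Sec:Coho:L2} is $a \mapsto a \otimes N_i$ with $N_i = \sum_{gP_i} gP_i$ the "sum of cosets" element) followed by the Shapiro isomorphism. So compatibility reduces to the assertion that the fixed $\Z[G]$--isomorphism $\Z[G/P_1] \cong \Z[G/P_2]$ can be chosen to send $N_1 \mapsto N_2$; equivalently, that it carries the trivial-isotypic line spanned by $N_i$ in one permutation module to that in the other.

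That last normalization is the main obstacle, and it is where finiteness of $G$ enters. I would argue as follows: the element $N_i$ spans $(\Z[G/P_i])^G$, the $G$-invariants, which is a rank-one direct summand with trivial $G$-action; moreover $\Z[G/P_i] \otimes \Q$ contains the trivial representation with multiplicity exactly one, so any $\Q[G]$--isomorphism $\Q[G/P_1] \cong \Q[G/P_2]$ sends the line $\Q N_1$ to $\Q N_2$, hence $N_1 \mapsto \pm N_2$ after rescaling over $\Z$ (since $N_i$ is a generator of the invariant summand over $\Z$). Replacing the isomorphism by its negative if necessary, and using that $|G| < \infty$ guarantees these invariant lines are honest $\Z$-summands, we get an isomorphism with $N_1 \mapsto N_2$. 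Feeding this choice of $\Theta$ through the computation of the previous paragraph makes the outer triangle of \eqref{Eq:Natural2} commute; the inner restriction triangle commutes because $\psi_k$ is, by construction, induced by a $\Z[G]$--module map and restriction is natural. I expect the bookkeeping identifying $\CoInd$ on cohomology with "apply $H^k(G,-)$ to the unit, then Shapiro" to be the fiddliest part to write out carefully, but it is standard once the normalization $N_1 \mapsto N_2$ is secured.
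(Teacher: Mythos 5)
Your overall strategy is the same as the paper's: use Lemma~\ref{Sec:Coho:L2} to rewrite $\CoInd_{P_i}^G(\Res_{P_i}^G(A))$ as $A\otimes_\Z \Z[G/P_i]$, tensor the chosen $\Z[G]$--isomorphism $\theta\colon \Z[G/P_1]\to\Z[G/P_2]$ with $A$, and conjugate by Shapiro's lemma. What you add — and the paper genuinely glosses over with its one-line ``compatibility follows from naturality'' — is the observation that an \emph{arbitrary} choice of $\theta$ need not make the diagram commute, because $\theta$ might send $N_1=\sum_{gP_1}gP_1$ to $-N_2$ rather than $N_2$; your normalization $\theta(N_1)=N_2$ is exactly the right fix, and your justification (any $\Z[G]$--isomorphism restricts to an isomorphism of the rank-one invariant submodules $\Z N_1\to\Z N_2$, hence $N_1\mapsto\pm N_2$; replace $\theta$ by $-\theta$ if needed) is correct.

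Three smaller points are off, though none of them sinks the argument. First, you have restriction and co-restriction swapped: the unit $a\mapsto a\otimes N_i$ of $\Res\dashv\CoInd$ followed by Shapiro's isomorphism computes \emph{restriction} $H^k(G,A)\to H^k(P_i,\Res A)$, not co-restriction; co-restriction is induced by the augmentation $1\otimes\varepsilon_i\colon A\otimes\Z[G/P_i]\to A$, $a\otimes gP_i\mapsto a$. Second, and related: you assert the restriction triangle commutes automatically ``by naturality,'' but in fact it too needs the normalization — commutativity of the restriction triangle forces $(1\otimes\theta)(a\otimes N_1)=a\otimes N_2$, i.e.\ $\theta(N_1)=N_2$, while commutativity of the co-restriction triangle forces $\varepsilon_2\circ\theta=\varepsilon_1$; a short check shows these two conditions are the \emph{same} sign condition, so your one normalization handles both. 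Third, $\Z N_i$ is not in general a $\Z[G]$--direct summand of $\Z[G/P_i]$ (its image under the augmentation is $[G:P_i]\Z$, not $\Z$); you don't actually use the summand property, only that $\theta$ restricts to an isomorphism on $G$--invariants, which holds for any $\Z[G]$--isomorphism. With those corrections your write-up would be a more complete version of the paper's proof.
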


\begin{proof}
By Shapiro's lemma (see \cite[III.8]{Brown}), we have $H^{k}(P_{i},\Res_{P_i}^G(A))=H^{k}(G,\CoInd_{P_i}^G(\Res_{P_i}^G(A)))$. By Lemma \ref{Sec:Coho:L2}, the coefficients for the latter cohomology groups are $A \otimes_{\Z} \Z[G/P_i]$, viewed as $\Z[G]$--modules. Since $P_{1}$ and $P_{2}$ are $\Z$--equivalent, these coefficient modules are $\Z[G]$--isomorphic. Thus, the right hand side of the equality above is actually independent of $i$, providing the isomorphism as claimed. Compatibility follows from the naturality of the  isomorphism in Shapiro's lemma. Specifically, upon choosing an isomorphism of the $\Z[G]$--modules $\Z[G/P_1]$ and $\Z[G/P_2]$, isomorphisms in cohomology groups
\[ H^k(P_1,\Res_{P_1}^G(A)) \to H^{k}(G,\CoInd_{P_1}^G(\Res_{P_1}^G(A))) \to H^{k}(G,\CoInd_{P_2}^G(\Res_{P_2}^G(A))) \to H^k(P_2,\Res_{P_1}^G(A)) \]
are induced by isomorphisms of coefficients.  
\end{proof}

We now deduce a few corollaries of the above. First, we observe that if $\psi\colon \Gamma \to G$ is a surjective homomorphism and $\Gamma_i = \psi^{-1}(P_i)$ for $\Z$--equivalent subgroups $P_1,P_2 \leq G$, then $\Gamma_1,\Gamma_2 \leq \Gamma$ are also $\Z$--equivalent subgroups. In particular, via the previous subsection, we obtain the following lemma.
\begin{lemma}\label{Sec:Coho:L4}
Let $\psi\colon \Gamma \to G$ be a surjective homomorphism, $P_1,P_2 \leq G$ be $\Z$--equivalent subgroups, and $\Gamma_i = \psi^{-1}(P_i)$. Then for any $\Z[\Gamma]$--module $A$ and any nonnegative integer $k$, there is a compatible isomorphism $H^k(\Gamma_1,\Res_{\Gamma_1}^\Gamma(A)) \to H^k(\Gamma_2,\Res_{\Gamma_2}^\Gamma(A))$.
\end{lemma}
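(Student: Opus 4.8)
The plan is to reduce Lemma \ref{Sec:Coho:L4} to the already-proved Lemma \ref{Sec:Coho:L3} by observing that all the relevant cohomological data is pulled back from the finite quotient $G$. First I would verify the claim made in the paragraph preceding the statement: if $\psi\colon \Gamma \to G$ is surjective and $\Gamma_i = \psi^{-1}(P_i)$, then $\Gamma_1, \Gamma_2 \leq \Gamma$ are $\Z$--equivalent. This is a bijection-counting observation: since $\psi$ is surjective, the natural map $\Gamma/\Gamma_i \to G/P_i$ is a bijection of left $\Gamma$--sets (where $\Gamma$ acts on $G/P_i$ through $\psi$), hence $\Z[\Gamma/\Gamma_i] \cong \psi^*(\Z[G/P_i])$ as $\Z[\Gamma]$--modules. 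A $\Z[G]$--isomorphism $\Z[G/P_1] \cong \Z[G/P_2]$ therefore pulls back to a $\Z[\Gamma]$--isomorphism $\Z[\Gamma/\Gamma_1] \cong \Z[\Gamma/\Gamma_2]$. Note that $[\Gamma : \Gamma_i] = [G : P_i] < \infty$, so the $\Gamma_i$ are finite index in $\Gamma$; this is what lets Lemmas \ref{Sec:Coho:L1} and \ref{Sec:Coho:L2} apply with $G$ replaced by $\Gamma$.

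Next I would run the same argument as in the proof of Lemma \ref{Sec:Coho:L3}, now in the possibly-infinite group $\Gamma$. By Shapiro's lemma, $H^k(\Gamma_i, \Res_{\Gamma_i}^\Gamma(A)) \cong H^k(\Gamma, \CoInd_{\Gamma_i}^\Gamma(\Res_{\Gamma_i}^\Gamma(A)))$; note Shapiro's lemma holds for arbitrary groups and arbitrary subgroups, so no finiteness of $\Gamma$ is needed here. By Lemma \ref{Sec:Coho:L2} applied to the finite-index pair $\Gamma_i \leq \Gamma$, the coefficient module is $A \otimes_\Z \Z[\Gamma/\Gamma_i]$. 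By the first paragraph these coefficient modules are $\Z[\Gamma]$--isomorphic for $i = 1, 2$ (tensoring the isomorphism $\Z[\Gamma/\Gamma_1] \cong \Z[\Gamma/\Gamma_2]$ with $A$ over $\Z$), so the two cohomology groups are abstractly isomorphic. Compatibility with $\Res$ and $\CoInd$ in diagram \eqref{Eq:Natural2} follows exactly as before from the naturality of the Shapiro isomorphism: the composite isomorphism is induced by a morphism of coefficient systems over the fixed group $\Gamma$, and $\Res$ (resp.~$\CoInd$) commutes with coefficient morphisms.

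The one point requiring a little care — and the closest thing to an obstacle — is that Lemma \ref{Sec:Coho:L3} was stated for finite $G$, whereas here the ambient group $\Gamma$ is infinite; so rather than invoking Lemma \ref{Sec:Coho:L3} as a black box I would re-derive it in the present setting, using only that $\Gamma_i$ has finite index in $\Gamma$ (which is all the proof of Lemma \ref{Sec:Coho:L3} actually used, via Lemmas \ref{Sec:Coho:L1} and \ref{Sec:Coho:L2}). Equivalently, one could first strengthen Lemma \ref{Sec:Coho:L3} to arbitrary $G$ with finite-index $\Z$--equivalent subgroups and then apply it directly; the proof is verbatim the same. Finally, I would note the intended application: taking $\Gamma = \pi_1(M)$, $\psi$ a surjection to a finite group $G$ admitting a $\Z$--Gassman triple $(G, P_1, P_2)$, and $A = \Z$ with trivial action, the lemma yields compatible isomorphisms $H^k(\pi_1(M_i), \Z) \to H^k(\pi_1(M_{i'}), \Z)$, which by the $K(\pi,1)$ remark at the end of Section 2 are the compatible isomorphisms $H^k(M_i, \Z) \to H^k(M_{i'}, \Z)$ asserted in Theorem \ref{Sec:Intro:T1}(3); the flexibility in the coefficient module $A$ is exactly what answers Question 2 of \cite{prasad}.
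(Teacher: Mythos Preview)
Your proposal is correct and follows essentially the same approach as the paper: the paper simply observes that $\Gamma_1,\Gamma_2$ are $\Z$--equivalent in $\Gamma$ (your first paragraph) and then says the lemma follows ``via the previous subsection,'' i.e.~by rerunning the Shapiro/coefficient argument of Lemma~\ref{Sec:Coho:L3}. You are in fact more careful than the paper in flagging that Lemma~\ref{Sec:Coho:L3} is stated for finite $G$ and explaining why only the finite-index hypothesis is actually used.
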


One case of Lemma \ref{Sec:Coho:L4} of particular interest is when $A$ is a trivial $\Z[\Gamma]$--module (i.e. the $\Gamma$--action is trivial).

\begin{cor}\label{Sec:Coho:C1}
Let $\psi\colon \Gamma \to G$ be a surjective homomorphism, $P_1,P_2 \leq G$ be $\Z$--equivalent subgroups, and $\Gamma_i = \psi^{-1}(P_i)$. Then for any trivial $\Z[\Gamma]$--module $A$ and any nonnegative integer $k$, there is a compatible isomorphism $H^k(\Gamma_1,A) \to H^k(\Gamma_2,A)$.
\end{cor}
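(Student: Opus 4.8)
The plan is to derive Corollary~\ref{Sec:Coho:C1} as the immediate special case $A = \Z$ (or any trivial module) of Lemma~\ref{Sec:Coho:L4}. The point is simply that when $A$ carries the trivial $\Gamma$--action, the restrictions $\Res_{\Gamma_1}^\Gamma(A)$ and $\Res_{\Gamma_2}^\Gamma(A)$ are just $A$ again as a trivial $\Z[\Gamma_i]$--module, so the statement of Lemma~\ref{Sec:Coho:L4} literally reads: there is a compatible isomorphism $H^k(\Gamma_1, A) \to H^k(\Gamma_2, A)$. Thus the corollary requires essentially no new argument beyond citing the lemma and observing what the coefficient modules specialize to.

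First I would note that $\Res_{\Gamma_i}^\Gamma$ applied to a trivial $\Z[\Gamma]$--module yields the same abelian group $A$ equipped with the trivial $\Z[\Gamma_i]$--action, since the $\Gamma$--action was already trivial and restriction only remembers the action of the subgroup. Hence $H^k(\Gamma_i, \Res_{\Gamma_i}^\Gamma(A)) = H^k(\Gamma_i, A)$ in the ordinary (trivial-coefficient) sense. Second, I would invoke Lemma~\ref{Sec:Coho:L4} directly with this $A$: it produces a compatible isomorphism $H^k(\Gamma_1, \Res_{\Gamma_1}^\Gamma(A)) \to H^k(\Gamma_2, \Res_{\Gamma_2}^\Gamma(A))$, and under the identification just made this is precisely a compatible isomorphism $H^k(\Gamma_1, A) \to H^k(\Gamma_2, A)$. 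Finally, the compatibility diagram~\eqref{Eq:Natural2} specializes: with trivial coefficients the maps labeled $\Res$ and $\CoInd$ in that diagram become the usual restriction and corestriction (transfer) maps in group cohomology, so the resulting square is exactly the diagram whose commutativity is asserted. One should remark that this is the group-cohomology incarnation of the diagram~\eqref{Eq:Natural} appearing in the introduction, via the identification of $H^k(M_i,\Z)$ with $H^k(\Gamma_i,\Z)$ from \S2 when $M = \Hy^n/\Gamma$ is aspherical.

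Since this is a pure specialization, there is no genuine obstacle; the only thing to be careful about is bookkeeping, namely confirming that the abstract adjunction-theoretic maps ($\CoInd$, which by Lemma~\ref{Sec:Coho:L1} agrees with $\Ind$ for finite-index subgroups) really do reduce to the classical restriction/corestriction pair on cohomology, and that the isomorphism constructed in the proof of Lemma~\ref{Sec:Coho:L3} (via Shapiro's lemma and the chosen $\Z[G]$--module isomorphism $\Z[G/P_1] \cong \Z[G/P_2]$) is the one being transported along $\psi$. Both of these are standard facts about group cohomology, so the proof of the corollary is a one- or two-sentence deduction.
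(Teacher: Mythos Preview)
Your proposal is correct and matches the paper's approach exactly: the corollary is presented there without proof, simply as the special case of Lemma~\ref{Sec:Coho:L4} in which $A$ has trivial $\Gamma$--action, so that $\Res_{\Gamma_i}^\Gamma(A)=A$. Your additional remarks about the compatibility diagram and the identification with singular cohomology are accurate but go beyond what the paper records at this point.
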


We note that one deficiency of Lemma \ref{Sec:Coho:L4} is the requirement that our initial module $A$ be a $\Z[\Gamma]$--module. This prevents us from obtaining a bijection between the $\Z[\Gamma_1]$--modules and $\Z[\Gamma_2]$--modules in a way that induces compatible isomorphisms in group cohomology. 

\section{Proof of Theorem \ref{Sec:Intro:T1}}

In this section, we prove Theorem \ref{Sec:Intro:T1}, Corollary \ref{Sec:Intro:C1}, and Corollary \ref{Sec:Intro:C2}. 

\subsection{Algebraic construction}

Throughout this section, for each $r \in \N$, we will denote the free group of rank $r$ by $F_r$. The main goal of this section is the following construction of arbitrarily large families of finite index subgroups of certain lattices that are pairwise non-isomorphic and pairwise $\Z$--equivalent. 

\begin{prop}\label{TechConProp}
Let $\mathrm{G}$ be a simple Lie group that is not isogenous to $\SL(2,\R)$ and let $\Gamma \leq \mathrm{G}$ be a lattice that is large and non-arithmetic. Then for each $j\in \N$, there exist finite index subgroups $\Delta_1,\dots,\Delta_j \leq \Gamma$ such that
\begin{itemize}
\item[(a)]
The subgroups $\Delta_i$ are pairwise non-isomorphic.
\item[(b)]
The subgroups $\Delta_i$ are pairwise $\Z$--equivalent.
\end{itemize}
\end{prop}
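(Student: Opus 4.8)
The strategy is to produce, inside $\Gamma$, a single finite-index subgroup that surjects onto a finite group $Q$ that itself contains $j$ pairwise non-conjugate but $\Z$-equivalent subgroups; pulling these back gives the $\Delta_i$. The building block is Scott's $\Z$-Gassman triple $(\PSL(2,\F_{29}), P_1, P_2)$ with $P_1 \cong P_2 \cong \mathrm{Alt}(5)$ non-conjugate in $\PSL(2,\F_{29})$. To get $j$ many rather than just two, I would pass to a suitable power: inside $\G = \PSL(2,\F_{29})^{j-1}$ (or a product of copies together with a diagonal-type arrangement) one finds $j$ subgroups $\wh P_1, \dots, \wh P_j$ of the form $P_{a_1} \times \cdots \times P_{a_{j-1}}$ with each $a_\ell \in \{1,2\}$ chosen so that any two of the $j$ chosen tuples differ in at least one coordinate; since $\Z$-equivalence is preserved under direct products (tensoring the module isomorphisms coordinatewise) all of these are $\Z$-equivalent in $\G$, while distinctness of the tuples makes them pairwise non-conjugate. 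One must check that non-conjugacy is genuine and, more importantly, that the pulled-back subgroups will be non-isomorphic as abstract groups — this is where the hypotheses on $\Gamma$ enter.

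Next I would use that $\Gamma$ is large: there is a finite-index $\Gamma_0 \le \Gamma$ and a surjection $\Gamma_0 \twoheadrightarrow F_r$ onto a non-abelian free group. Free groups surject onto everything finite, so I obtain a surjection $\Gamma_0 \twoheadrightarrow \G$, hence finite-index subgroups $\Delta_i := $ the preimage in $\Gamma_0$ (equivalently in $\Gamma$) of $\wh P_i$. By the remark preceding Lemma~\ref{Sec:Coho:L4} (preimages of $\Z$-equivalent subgroups under a surjection are $\Z$-equivalent), the $\Delta_i$ are pairwise $\Z$-equivalent, giving (b). Here I should be a little careful: Lemma~\ref{Sec:Coho:L4}'s preamble gives $\Z$-equivalence of preimages in $\Gamma_0$; since $[\Gamma:\Gamma_0]<\infty$ this also makes them finite-index in $\Gamma$, which is all that is needed.

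For (a), the non-isomorphism of the $\Delta_i$ as abstract groups, the idea is that an abstract isomorphism $\Delta_i \to \Delta_{i'}$ would, via Mostow–Prasad rigidity (legitimate since $\G$ is not isogenous to $\SL(2,\R)$, so we are outside the Teichmüller-flexible case and can invoke Mostow rigidity for the relevant rank-one or higher-rank lattices), be induced by conjugation in the ambient Lie group $\mathrm{G}$ — or more precisely by an element of $\Comm(\Gamma)$. Non-arithmeticity then forces $[\Comm(\Gamma):\Gamma] < \infty$ by the Margulis dichotomy recalled in Section~2, so up to finite index the only available identifications come from $N_{\mathrm{G}}(\Gamma)$, a finite extension of $\Gamma$. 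One then arranges the construction — e.g. by first replacing $\Gamma_0$ by a further characteristic or normal finite-index subgroup avoiding this finite ambiguity, and by choosing the $\wh P_i$ to have distinct images (say distinct covering degrees $[\G:\wh P_i]$ are all equal, so instead one distinguishes them by a finer invariant such as the abelianization or the isomorphism type of a characteristic subgroup) — so that $\Delta_i \cong \Delta_{i'}$ would imply $\wh P_i$ and $\wh P_{i'}$ are conjugate in $\G$ or differ by the finite commensurator ambiguity, contradicting the choice of tuples once $j$ is taken large enough relative to that finite ambiguity.

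\textbf{Main obstacle.} The genuine difficulty is step (a): upgrading ``non-conjugate subgroups of the finite quotient'' to ``non-isomorphic finite-volume manifolds / lattices.'' The $\Z$-equivalence machinery is soft and formal, but ruling out abstract isomorphisms requires both rigidity (to convert abstract isomorphisms into geometric ones) and non-arithmeticity (to bound, via Margulis, the supply of geometric isomorphisms to something finite), and then a combinatorial argument ensuring the $j$ chosen subgroups of $\G$ remain pairwise distinct even after quotienting by that bounded ambiguity — which is why one needs $j$ arbitrary but the construction scaled to $j$ (taking powers of Scott's example) rather than a fixed finite list. I expect the cleanest writeup isolates a lemma: ``if $\Gamma$ is non-arithmetic with $\G \not\simeq \SL(2,\R)$, then among any infinite family of finite-index subgroups of a fixed $\Gamma_0$ obtained as distinct-index or distinct-invariant preimages, only finitely many fall into each abstract isomorphism class,'' from which (a) follows by pigeonhole.
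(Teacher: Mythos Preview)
Your strategy is the paper's: Scott's pair $P_1,P_2\le Q=\PSL(2,\F_{29})$, products $P_z=\prod_i P_{z_i}\le Q^m$ giving $2^m$ pairwise non-conjugate $\Z$--equivalent subgroups, largeness to surject a finite-index subgroup of $\Gamma$ onto $Q^m$, and finally Mostow--Prasad rigidity together with the Margulis commensurator bound to control how many non-conjugate finite-index subgroups can fall into one abstract isomorphism class. The pigeonhole you describe at the end is precisely how the paper finishes. (Your detour through ``finer invariants such as the abelianization'' is unnecessary and would not work anyway --- all the $P_z$ are abstractly isomorphic; the only input used is that at most $C_\Lambda=[\Comm(\Gamma):\Lambda]$ non-conjugate subgroups of $\Lambda$ can be isomorphic to a given one.)

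There is, however, a real gap. You assert that the fixed $\Gamma_0$ surjects onto $\G=Q^{j-1}$ because ``free groups surject onto everything finite,'' but largeness only hands you $\Gamma_0\twoheadrightarrow F_r$ for some \emph{fixed} rank $r$, and $F_r$ surjects onto $Q^m$ only for $m\le\beta_{r,Q}:=\abs{\Hom_{\mathrm{sur}}(F_r,Q)/\Aut(Q)}$ (this is Hall's theorem, Theorem~\ref{HallThm}), a finite number once $r$ is fixed. With $\Gamma_0$ fixed you therefore produce at most $2^{\beta_{r,Q}}$ non-conjugate subgroups, hence at most $2^{\beta_{r,Q}}/C_{\Gamma_0}$ isomorphism classes --- a bound independent of $j$. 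The paper's repair is to let $r$ grow: pass to $\Gamma_r=\psi^{-1}(F_r)\le\Gamma_2$ (using that $F_2$ contains a copy of $F_r$ of index $r-1$) and surject $\Gamma_r\twoheadrightarrow Q^{\beta_{r,Q}}$. Now $2^{\beta_{r,Q}}$ grows at least exponentially in $r$ (Lemma~\ref{HomSizeLemma}) while the relevant commensurator bound $C_{\Gamma_r}=(r-1)\,C_{\Gamma_2}$ grows only linearly, so choosing $r$ with $2^{\beta_{r,Q}}\ge j(r-1)C_{\Gamma_2}$ makes your pigeonhole argument go through.
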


We note that Proposition \ref{TechConProp} holds when $\mathrm{G}$ is isogenous to $\SL(2,\R)$ but with (a) changed to the following:
\begin{itemize}
\item[(a')]
The subgroups $\Delta_i$ are pairwise non-conjugate in $\mathrm{G}$.
\end{itemize} 

This subsection is devoted to the proof Proposition \ref{TechConProp}. We start with a basic lemma on the size of the set $\mathrm{Hom}_{\mathrm{sur}}(F_r,Q)$ of surjective homomorphisms from a free group $F_r$ to a finite group $Q$. 

\begin{lemma}\label{HomSizeLemma}
If $Q$ is a finite group that is minimally generated by $r_Q$ elements, then $\abs{\mathrm{Hom}_{\mathrm{sur}}(F_r,Q)} \geq \abs{Q}^{r-r_Q}$ for all $r \geq r_Q$.
\end{lemma}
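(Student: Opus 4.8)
The plan is to estimate $\abs{\mathrm{Hom}_{\mathrm{sur}}(F_r,Q)}$ from below by relating it to $\abs{\mathrm{Hom}(F_r,Q)} = \abs{Q}^r$ via inclusion–exclusion over proper subgroups. Since a homomorphism $F_r \to Q$ is determined freely by the images of the $r$ generators, $\abs{\mathrm{Hom}(F_r,Q)} = \abs{Q}^r$, and a homomorphism fails to be surjective exactly when its image lies in some maximal subgroup of $Q$. So the first step is to write
\[
\abs{\mathrm{Hom}_{\mathrm{sur}}(F_r,Q)} \geq \abs{Q}^r - \sum_{M} \abs{M}^r,
\]
where $M$ ranges over the maximal subgroups of $Q$ (each contributes $\abs{\mathrm{Hom}(F_r,M)} = \abs{M}^r$ homomorphisms with image inside $M$, and every non-surjective homomorphism is counted at least once).

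Next I would bound the right-hand side. Each maximal subgroup $M$ has index at least $2$, so $\abs{M} \leq \abs{Q}/2$, giving $\abs{M}^r \leq \abs{Q}^r/2^r$. The number of maximal subgroups of $Q$ is at most $2^{\abs{Q}}$ trivially, but that is too crude; a cleaner route is to bound the number of maximal (indeed, all) subgroups by $\abs{Q}^{\log_2 \abs{Q}}$ or similar — but actually the slickest argument avoids counting subgroups altogether. The key observation is that the minimal number of generators $r_Q$ enters as follows: consider the map $\mathrm{Hom}(F_r,Q) \to \mathrm{Hom}(F_{r_Q},Q)$ obtained by restricting to the first $r_Q$ coordinates (i.e. forgetting the last $r - r_Q$ generator images). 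Each fibre has size exactly $\abs{Q}^{r - r_Q}$. Now among the $\abs{Q}^{r_Q}$ points of $\mathrm{Hom}(F_{r_Q},Q)$, at least one is surjective (by definition of $r_Q$ — there exist $r_Q$ elements generating $Q$). If $(q_1,\dots,q_{r_Q})$ generate $Q$, then every tuple $(q_1,\dots,q_{r_Q},\ast,\dots,\ast)$ in its fibre is a surjective homomorphism $F_r \to Q$. Hence $\abs{\mathrm{Hom}_{\mathrm{sur}}(F_r,Q)} \geq \abs{Q}^{r - r_Q}$, which is exactly the claimed bound.

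So the actual proof is short and the inclusion–exclusion digression above is unnecessary: the whole statement follows from the fibration argument. The steps, in order: (1) a generating $r_Q$-tuple $(q_1,\dots,q_{r_Q})$ for $Q$ exists by definition of $r_Q$; (2) for any choice of $q_{r_Q+1},\dots,q_r \in Q$, the tuple $(q_1,\dots,q_r)$ determines a homomorphism $F_r \to Q$ (universal property of the free group) whose image contains $\langle q_1,\dots,q_{r_Q}\rangle = Q$, hence is surjective; (3) distinct choices of the trailing entries give distinct homomorphisms; (4) there are $\abs{Q}^{r - r_Q}$ such choices. I do not expect any genuine obstacle here — the only thing to be slightly careful about is the edge case $r = r_Q$, where the bound reads $\abs{\mathrm{Hom}_{\mathrm{sur}}(F_r,Q)} \geq 1$, which holds precisely because a generating $r_Q$-tuple exists. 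If a stronger or cleaner bound were wanted one could invoke the inclusion–exclusion estimate, but for the stated inequality the fibre-counting argument is both sufficient and essentially trivial.
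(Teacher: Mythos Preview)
Your proposal is correct and, after discarding the inclusion--exclusion digression, takes essentially the same approach as the paper: fix a surjective $r_Q$--tuple and extend it freely on the remaining $r-r_Q$ generators to produce $\abs{Q}^{r-r_Q}$ distinct surjections. The paper's proof is precisely your steps (1)--(4).
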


\begin{proof}
Given $r \geq r_Q$, let $X_r = \set{x_1,\dots,x_r}$ and let $F_r = F(X_r)$ be the free group generated by $X_r$. We can view $F_{r_Q} \leq F_r$ by $F_{r_Q} = \innp{x_1,\dots,x_{r_Q}}$. Fixing $\varphi \in \mathrm{Hom}_{\textrm{sur}}(F_{r_Q},Q)$, for each $q_{r_Q+1},\dots,q_r \in Q$, we define $\Phi\colon F_r \to Q$ to be the unique homomorphism induced by the function $f\colon X_r \to Q$ given by
\[ f(x_j) = \begin{cases} \varphi(x_j), & j \leq r_Q, \\ q_j, & j > r_Q. \end{cases} \]
Since $\varphi$ is surjective, the homomorphisms $\Phi$ are surjective and distinct for all distinct (as ordered sets) choices of $q_{r_Q+1},\dots,q_r$. Hence $\abs{\mathrm{Hom}_{\textrm{sur}}(F_r,Q)} \geq \abs{Q}^{r-r_Q}$.
\end{proof}

We also require the following result of P.~Hall \cite{Hall}.

\begin{thm}\label{HallThm}
Let $Q$ be a non-abelian finite simple group and $\Gamma$ be a finitely generated group. If $\varphi_1,\dots,\varphi_m \in \mathrm{Hom}_{\mathrm{sur}}(\Gamma,Q)$ and $\varphi_i \ne \theta \circ \varphi_j$ for all $\theta \in \Aut(Q)$ and all $i \ne j$, then $\varphi_1 \times \dots \times \varphi_m\colon \Gamma \to Q^m$ is surjective.
\end{thm}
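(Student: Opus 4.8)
\textbf{Proof proposal for Theorem \ref{HallThm}.}

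The plan is to induct on the number $m$ of homomorphisms. The base case $m=1$ is the hypothesis that $\varphi_1$ is surjective. For the inductive step, suppose $\varphi_1 \times \dots \times \varphi_{m-1}\colon \Gamma \to Q^{m-1}$ is surjective, and set $\Phi = \varphi_1 \times \dots \times \varphi_m\colon \Gamma \to Q^m$. Let $N = \Phi(\Gamma)$. First I would observe that $N$ projects onto $Q^{m-1}$ (the first $m-1$ coordinates) by the inductive hypothesis, and onto $Q$ (the last coordinate) since $\varphi_m$ is surjective. So $N$ is a subdirect product of $Q^{m-1}$ and $Q$, and the task is to show $N = Q^m$.

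The key tool is Goursat's lemma, or more precisely the structure of subdirect products of a group with a product of simple groups. Since $Q$ is simple, I would use the standard fact that a subdirect product $N \leq A \times Q$ (with $A$ arbitrary, $Q$ simple nonabelian) is either all of $A \times Q$, or else the projection $N \to A$ is an isomorphism onto $A$ composed with the projection — that is, $N$ is the graph of a surjective homomorphism $A \to Q$, which must factor through $A/K$ for some normal subgroup $K$ with $A/K \cong Q$. Applying this with $A = Q^{m-1}$: either $N = Q^m$ (and we are done), or there is a surjection $\pi\colon Q^{m-1} \to Q$ with $N = \{(a, \pi(a)) : a \in Q^{m-1}\}$, which forces $\varphi_m = \pi \circ (\varphi_1 \times \dots \times \varphi_{m-1})$ on $\Gamma$. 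The main obstacle — and the heart of the argument — is to rule out this second case. Since $Q$ is nonabelian simple, any surjection $\pi\colon Q^{m-1} \to Q$ must kill all but one factor: indeed $\ker \pi$ is a maximal normal subgroup, and the normal subgroups of $Q^{m-1}$ are products of subsets of the factors (again using simplicity of $Q$), so $\ker \pi = \prod_{l \ne j} Q$ for some index $j$, and $\pi$ is $(q_1,\dots,q_{m-1}) \mapsto \alpha(q_j)$ for some automorphism $\alpha \in \Aut(Q)$. Hence $\varphi_m = \alpha \circ \varphi_j$ on $\Gamma$, contradicting the hypothesis that $\varphi_m \ne \theta \circ \varphi_j$ for all $\theta \in \Aut(Q)$.

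I would take care to cite or sketch the two structural facts used: that the normal subgroups of a finite power $Q^k$ of a nonabelian simple group $Q$ are exactly the sub-products $\prod_{l \in S} Q$ for $S \subseteq \{1,\dots,k\}$; and the subdirect product dichotomy for $A \times Q$. Both are elementary consequences of simplicity, and the paper attributes the overall statement to P.~Hall \cite{Hall}, so a brief indication should suffice. One small point to handle cleanly: in the inductive step the hypothesis on the $\varphi_i$ for $i \leq m-1$ is inherited directly, so the induction is set up without friction; the only place the full strength of the hypothesis (involving $\varphi_m$ paired against \emph{each} earlier $\varphi_j$) is consumed is in the final contradiction above.
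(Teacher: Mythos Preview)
The paper does not actually prove Theorem~\ref{HallThm}; it is stated as a result of P.~Hall and simply cited to \cite{Hall}, so there is no in-paper argument to compare against.

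Your proof is correct and is the standard modern argument for this fact. The induction is set up properly, the Goursat-type dichotomy for a subdirect product $N \leq A \times Q$ with $Q$ simple is exactly right, and the classification of normal subgroups of $Q^{m-1}$ for nonabelian simple $Q$ is the appropriate tool to finish. One small phrasing issue: the sentence ``the projection $N \to A$ is an isomorphism onto $A$ composed with the projection'' is slightly garbled---what you mean (and what your next clause correctly spells out) is that in the second case $N$ is the graph of a surjection $A \to Q$, so that $N \to A$ is a bijection and the last coordinate is determined by the first $m-1$. With that clarified the argument is clean. Since the paper treats this as a black-box citation, your write-up already exceeds what the paper provides.
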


With all of the requisite material assembled, we now prove Proposition \ref{TechConProp}. 

\begin{proof}[Proof of Proposition \ref{TechConProp}]
We begin by setting $\mathcal{X}_r(Q) \bdef \mathrm{Hom}_{\textrm{sur}}(F_r,Q)/\Aut(Q)$ where the action of $\Aut(Q)$ on $\mathrm{Hom}_{\textrm{sur}}(F_r,Q)$ is given by post-composition. By Lemma \ref{HomSizeLemma}, we see that $\beta_{r,Q} = \abs{\mathcal{X}_r(Q)} \geq \alpha_Q^{-1}\abs{Q}^{r-r_Q}$ where $\alpha_Q = \abs{\Aut(Q)}$. For each equivalence class $x$ in $\mathcal{X}_r(Q)$, we fix a representative $\varphi_x \in \mathrm{Hom}_{\textrm{sur}}(F_r,Q)$. By Theorem \ref{HallThm}, we have a surjective homomorphism $\Phi_r\colon F_r \to Q^{\beta_{r,Q}}$ given by $\Phi_r =  \prod_{x \in \mathcal{X}_r(Q)} \varphi_x$. Fixing $Q = \PSL(2,\F_{29})$ and setting $P_1,P_2 \leq Q$ to be the $\Z$--equivalent subgroups given by Scott \cite{scott}, for each $m \in \N$ and $z = (z_i) = \set{1,2}^m$, we define $P_z \leq Q^m$ to be the subgroup $P_z \bdef \prod_{i=1}^m P_{z_i}$. It follows that for any distinct $z,z' \in \set{1,2}^m$ that $P_z,P_{z'}$ are $\Z$--equivalent and non-conjugate in $Q^m$. In particular, $Q^m$ has $2^m$ pairwise non-conjugate, pairwise $\Z$--equivalent subgroups. 

Now, given a large, non-arithmetic lattice $\Gamma \leq \mathrm{G}$ and $j \in \N$, we must find finite index subgroups $\Delta_1,\dots,\Delta_j \leq \Gamma$ that are pairwise non-isomorphic and pairwise $\Z$--equivalent. Since $\Gamma$ is non-arithmetic, combining Mostow--Prasad (see \cite{Mostow}, \cite{GPra}) and Margulis \cite[Thm 1, p. 2]{Margulis}, there exists a constant $C_\Gamma \in \N$ such that if $\Delta \leq \Gamma$ is a finite index subgroup, there are at most $C_\Gamma$ non-conjugate subgroups of $\Gamma$ that are isomorphic to $\Delta$ as an abstract group. Explicitly, $C_\Gamma = [\Comm(\Gamma):\Gamma]$ and so when $\Lambda \leq \Gamma$ is a finite index subgroup, we have $C_\Lambda = C_\Gamma [\Gamma:\Lambda]$. As $\Gamma$ is also large, there exists a finite index subgroup $\Gamma_2 \leq \Gamma$ and a surjective homomorphism $\psi\colon \Gamma_2 \to F_2$. Given any $r \geq 3$, there exists a subgroup $F_r \leq F_2$ of index $r-1$ such that $F_r$ is a free group of rank $r$. To see this, we first note that we have a surjective homomorphism $F_2 \to \Z$ given by sending $a=1$ and $b=0$, where $\set{a,b}$ is a free basis for $F_2$. We compose this surjection with the surjective homomorphism $\Z \to \Z/(r-1)\Z$ given by reduction modulo $r-1$. The kernel of the homomorphism $F_2 \to \Z \to \Z/(r-1)\Z$ has index $r-1$ in $F_2$. It follows by the Nielsen--Schreier theorem (see \cite[Thm 2.10]{MKS} for instance) that this subgroup of $F_2$ is free and of rank $r$. Setting $\Gamma_r = \psi^{-1}(F_r)$, we see that there exists subgroups $\Gamma_r \leq \Gamma_2 \leq \Gamma$ and surjective homomorphisms $\psi_r\colon \Gamma_r \to F_r$ with $[\Gamma_2:\Gamma_r] = r-1$. Now, for the given $j \in \N$,  we select $r$ such that $2^{\beta_{r,Q}} \geq j(r-1)C_{\Gamma_2}$. Note that this can be done since $\beta_{r,Q} \geq \alpha_Q^{-1}\abs{Q}^{r-2}$ grows exponentially as a function of $r$ whereas $(r-1)C_{\Gamma_2}$ only grows linearly as a function of $r$. By selection of $\Gamma_r$ and $r$, we have the surjective homomorphism
\[ \begin{tikzcd} \Gamma_r \arrow[twoheadrightarrow, rr,"\psi_r"', bend right = 45] \arrow[rrrr,twoheadrightarrow,"\mu_r", bend left = 45] & & F_r \arrow[rr, "\Phi_r"', twoheadrightarrow,bend right = 45]  & & Q^{\beta_{r,Q}}. \end{tikzcd} \]
For each $z \in \set{1,2}^{\beta_{r,Q}}$, we define $\Delta_z = \mu_r^{-1}(P_z)$ and note that the subgroups $\Delta_z$ are pairwise non-conjugate in $\Gamma_r$ and are pairwise $\Z$--equivalent. There are $2^{\beta_{r,Q}}$ such subgroups and we know that for each $\Delta_z$, there are at most $C_{\Gamma_r}$ subgroups from this list that can be abstractly isomorphic to a fixed $\Delta_z$. As $C_{\Gamma_r} = (r-1)C_{\Gamma_2}$ and $2^{\beta_{r,Q}} \geq j(r-1)C_{\Gamma_2}$, there is a subset of these subgroups of size at least $j$ that are all pairwise non-isomorphic.   
\end{proof}

\subsection{Completing the proof of Theorem \ref{Sec:Intro:T1}}

To prove Theorem \ref{Sec:Intro:T1} from Proposition \ref{TechConProp}, a few more words are required. As noted in the introduction, by work of Agol \cite[Thm 9.2]{Agol}, every closed hyperbolic $3$--manifold is large. In higher dimensions, using the construction of Gromov--Piatetski-Shapiro \cite{GPS}, there exists infinitely many commensurability classes of complete, finite volume hyperbolic $n$--manifolds that are both non-arithmetic and large. We note that there exist infinitely many commensurability classes of closed or complete, finite volume non-arithmetic hyperbolic $n$--manifolds for every $n$ follows directly from \cite{GPS}. That these examples are also large is well known. For the readers' sake, we briefly recall the construction of these manifolds with largeness in mind. First, we start with a pair of compact hyperbolic $n$--manifolds $M_1,M_2$ with connected, totally geodesic boundaries that are isometric. Gluing $M_1,M_2$ along the common boundaries $\partial M_1 = \partial M_2 = N$ produces a closed hyperbolic $n$--manifold $M$. By construction, $\pi_1(M) = \pi_1(M_1) \ast_{\pi_1(N)} \pi_1(M_2)$ and is large (see \cite[Thm 3.2]{Lub}). Lastly, using the construction of Deligne--Mostow \cite{DM}, there exist complete, finite volume complex hyperbolic 2--manifolds that are both non-arithmetic and large. As in the construction \cite{GPS}, Deligne--Mostow do not explicitly state that the non-arithmetic lattices they construct are large. That some of these lattices are large follows from the fact that they have surjective homomorphisms to hyperbolic triangle groups; see \cite[Thm 3.1]{Der}, \cite{Kap}, and \cite[Thm 3.1]{Toledo}. 

We can apply Proposition \ref{TechConProp} to any manifold $M$ in the above classes. We have opted to only write out the case when $M$ is a closed hyperbolic $n$--manifold as the complex hyperbolic setting is logically identical. Given $j \in \N$, $n \geq 3$, and a closed hyperbolic $n$--manifold $M$ which is non-arithmetic and large, we can apply Proposition \ref{TechConProp} with $\Gamma = \pi_1(M)$. We obtain $j$ pairwise non-isomorphic, finite index subgroup $\Delta_1,\dots,\Delta_j$ that are $\Z$--equivalent. By Corollary \ref{Sec:Coho:C1}, for any abelian group $A$ endowed with a trivial $\Z[\Gamma]$--module structure, we obtain compatible isomorphisms between the cohomology groups $H^k(\Delta_i,A)$ and $H^k(\Delta_{i'},A)$ for all $k$ and all $i,i'$. Since $M$ is aspherical, $M$ is a $K(\Gamma,1)$ for $\Gamma$. Setting $M_i$ to be the associated finite covers corresponding to $\Delta_i$, we see that $M_i$ is a $K(\Delta_i,1)$ for all $i$. In particular, we have that $H^k(M_i,A)$ and $H^k(\Delta_i,A)$ are compatibly isomorphic; the compatibility of the isomorphisms between $H^k(\Delta_i,A)$ and $H^k(\Delta_{i'},A)$ produce compatible isomorphisms between the cohomology groups $H^k(M_i,A)$ and $H^k(M_{i'},A)$. As the groups $\Delta_i,\Delta_{i'}$ are not isomorphic, by Mostow--Prasad rigidity (see \cite{Mostow}, \cite{GPra}) the manifolds $M_i,M_{i'}$ are not isometric. Taking $A = \Z$ produces (3) of Theorem \ref{Sec:Intro:T1}. The proof of Theorem \ref{Sec:Intro:T1} is completed by noting that $\Z$--equivalence implies $\Q$--equivalence and $\Q$--equivalence implies the manifolds $M_i,M_{i'}$ satisfy (1) and (2) by \cite{sunada}. 

\section{Proof of  parts (1) and (2) of Theorem \ref{thm:cohXi} }

Given a commutative ring $R$, a finite group $G$, and a finite $G$--set $X$, let $R[X]=\Hom_{sets}(X,R)$. This defines a contravariant functor from finite $G$--sets to $R[G]$--modules; if $p\colon X\to Y$ is $G$--map, let $p^*\colon R[Y]\to R[X]$ denote the corresponding homomorphism. When $p$ is onto, we have a homomorphism $p_*\colon R[X]\to R[Y]$ defined by $(p_*\phi)(y)=\sum_{x\in p^{-1}(y)} \phi(x)$. If $X=G$ and $Y=G/P$, then $(p_{*}\circ p^*)(\phi) = \abs{P}\phi$. It follows that $\Q[G/P]$ is a direct summand of $\Q[G]$, which can be identified with the left ideal $\Q[G]e_P$, where $e_P= \frac{1}{\abs{P}} \sum_{g\in P} g$ is the corresponding idempotent. It is convenient to normalize $p_*, p^*$ as follows. Instead of $p_*$ use the inclusion $\iota_P\colon \Q[G]e_P\to \Q$, and replace $p^*$ by the projection $p_P(x)= xe_P$. Given $\Q$--equivalent subgroups $P_1,P_2\leq G$ and set $e_i = e_{P_i}$, $\iota_i = \iota_{P_i}$, and $p_i = p_{P_i}$ for $i=1,2$. Since $\Q[G/P_i]$ are summands of $\Q[G]$ as $\Q[G]$--modules,  it follows that a $\Q[G]$--module isomorphism $f'\colon \Q[G/P_1]\to \Q[G/P_2]$ can be extended to $\Q[G]$--module isomorphism $f\colon \Q[G]\to \Q[G]$ such that the diagram
\[ \begin{tikzcd} \Q[G]\arrow[dd, "f"', bend right =15] \arrow[rr,"p_1", bend left = 30] & &  \Q[G/P_1]\arrow[dd, "f'", bend left =15] \arrow[ll, "\iota_1"', bend left = 30] \\ & & \\  \Q[G]\arrow[rr, "p_2"', bend left =30] & & \Q[G/P_2] \ar[ll, "\iota_2", bend left =30] \end{tikzcd} \]
commutes. By Skolem--Noether, the extension $f$ is necessarily right multiplication by an invertible element, that we will also denote by $f\in (\Q[G])^\times$. The commutativity implies that
\begin{equation}\label{eq:1}
e_2 = f^{-1} e_1 f
\end{equation}
We record this fact.

\begin{lemma}\label{lemma:1}
$(G,P_1,P_2)$ is a Gassman triple if and only if there exists $f\in (\Q[G])^\times$ such that \eqref{eq:1} holds.
\end{lemma}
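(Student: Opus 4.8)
The plan is to extract the statement directly from the discussion preceding the lemma, making the two implications explicit. For the forward direction, suppose $(G,P_1,P_2)$ is a Gassman triple, so that there is a $\Q[G]$--module isomorphism $f'\colon \Q[G/P_1]\to \Q[G/P_2]$. First I would invoke the fact — established in the paragraph above via the normalized maps $p_{P_i}, \iota_{P_i}$ and the identification of $\Q[G/P_i]$ with the summand $\Q[G]e_i$ of $\Q[G]$ — that $f'$ extends to a $\Q[G]$--module automorphism $f\colon \Q[G]\to\Q[G]$ fitting into the commuting square with $p_1,p_2$ on top and $\iota_1,\iota_2$ on the bottom. Then I would use Skolem--Noether (or just the elementary fact that $\End_{\Q[G]}(\Q[G])\cong \Q[G]^{\mathrm{op}}$ acting by right multiplication) to write $f$ as right multiplication by a unit $f\in(\Q[G])^\times$. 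Chasing the element $1\in\Q[G]$ around the square gives $p_2(f) = f'(p_1(1))$, i.e. $e_1 f = f\, e_2$ after unwinding the definitions $p_i(x)=xe_i$, which rearranges to $e_2 = f^{-1}e_1 f$, namely \eqref{eq:1}.

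For the reverse direction, suppose there exists a unit $f\in(\Q[G])^\times$ with $e_2 = f^{-1}e_1 f$. Then right multiplication by $f$ carries the left ideal $\Q[G]e_1$ isomorphically onto $\Q[G]e_1 f = \Q[G] f e_2 = \Q[G] e_2$, since $f$ is a unit and $e_1 f = f e_2$. This is a $\Q[G]$--module isomorphism $\Q[G]e_1\cong \Q[G]e_2$, and under the identification $\Q[G]e_i\cong \Q[G/P_i]$ recalled above, it gives a $\Q[G]$--module isomorphism $\Q[G/P_1]\cong\Q[G/P_2]$, which is exactly the assertion that $(G,P_1,P_2)$ is a Gassman triple.

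I expect the only genuine subtlety to be bookkeeping: making sure the identification $\Q[G/P_i]\cong \Q[G]e_i$ and the normalizations of $p_i,\iota_i$ are used consistently so that the element-chase yields $e_1 f = f e_2$ with the idempotents (not their complements) and on the correct side. Everything else — that a $\Q[G]$--module endomorphism of $\Q[G]$ is right multiplication by an algebra element, that such a map is an isomorphism iff the element is a unit, that a direct-summand isomorphism extends to an automorphism of the whole module — is standard semisimple-algebra input, with semisimplicity of $\Q[G]$ guaranteed by Maschke's theorem since $G$ is finite. Since the paper has already done the extension-and-Skolem--Noether work in the lines immediately preceding the lemma, the proof can essentially just say ``this is the content of the preceding paragraph, together with the converse observation that $e_2=f^{-1}e_1f$ forces right multiplication by $f$ to be the desired isomorphism,'' and I would write it at that level of brevity.
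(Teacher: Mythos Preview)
Your proposal is correct and matches the paper's approach exactly: the forward direction is precisely the content of the paragraph preceding the lemma (extend $f'$ to an automorphism of $\Q[G]$, identify it via Skolem--Noether with right multiplication by a unit, and read off $e_1 f = f e_2$ from commutativity), and for the converse the paper simply says ``the converse above is clear,'' which is your observation that right multiplication by $f$ carries $\Q[G]e_1$ isomorphically onto $\Q[G]e_2$.
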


The converse above is clear. If $f\in G$, then  \eqref{eq:1} says that $P_i$ are conjugate. Thus the Gassman condition is a weakening of conjugacy. Note that there are plenty of invertible elements of $\Q[G]$ which do not come from $G$. To see this, observe that by Artin--Wedderburn, $\overline{\Q}[G]$ is a product of matrix algebras. An element $f\in \Q[G]$ is invertible if and only the components of $f\otimes \overline{\Q}$ are invertible as matrices.

We now prove the first two parts of Theorem \ref{thm:cohXi}.  The remaining part will be proved in the next section. Recall that we are given $Y \in \Var_K$, where $\mathrm{char}(K) = 0$, $p\colon X \to Y$ is a Galois \'etale cover with Galois group $G$ and $X_i = X/P_i$, where $(G,P_1,P_2)$ is an $R$--Gassman triple. These fit into a diagram
\begin{equation}\label{eq:diamond}
\begin{tikzcd} 
& X\arrow[ld, "p_{X,1}"', bend right = 20, twoheadrightarrow,] \arrow[rd, twoheadrightarrow, "p_{X,2}", bend left = 20] \arrow[dd, "p", twoheadrightarrow] &  \\ X_1 \arrow[rd, "p_{1,Y}"', bend right = 20, twoheadrightarrow] &  & X_2\arrow[ld, "p_{2,Y}", bend left = 20, twoheadrightarrow] \\ & Y & 
\end{tikzcd}
\end{equation}
Our goal is to show that the cohomology groups $H^k(X_1,R)$ and $H^k(X_2,R)$ are isomorphic as Hodge structures or Galois modules.
We start with a rather simple proof of part (i) for rational coefficients.

\begin{proof}[First proof of Theorem \ref{thm:cohXi} (1) when $R=\Q$.]
In this case,  pullback $p_{X,i}^*$ gives an isomorphism of vector spaces $H^k(X_i(\C),\Q) \cong H^k(X(\C),\Q)^{P_i}$, with  inverse given by the normalized transfer $\frac{1}{|P_i|}(p_{X,i})_*$. Fixing a K\"ahler metric on $Y$, we endow the manifolds $X_1,X_2$ and $X$ with the pullback of this K\"ahler metric. The rational Hodge structures on these spaces are given by the standard Hodge-de Rham isomorphism between $H^k(X_i(\C),\Q)\otimes \C$ and the space of harmonic $k$--forms on $X_i$ in tandem with the decomposition of the latter into $(p,q)$--parts. As this data is compatible under pullback, we see that $H^k(X_i(\C),\Q) \cong H^k(X(\C),\Q)^{P_i}$ as Hodge structures. Applying Lemma \ref{lemma:1}, we deduce $H^k(X(\C),\Q)^{P_1}\cong H^k(X(\C),\Q)^{P_2}$ as Hodge structures.
\end{proof}

The above strategy will fail for integer coefficients,  because we cannot identify  $H^k(X_i,\Z)$ with $H^k(X,\Z)^{P_i}$ .
 So instead, we  push the coefficients down to $Y$.

\begin{proof}[Proof of Theorem \ref{thm:cohXi} (1) and (2)]
Suppose that $K=\C$. By covering space theory, $p\colon X\to Y$ corresponds to a surjective homomorphism $\rho\colon \pi_1(Y(\C))\to G$. Through $\rho$, any $R[G]$--module gives rise to  a local system of $R$--modules on $Y$. The local systems corresponding to the $R[G]$--module $R[G/P_i]$ are precisely the sheaves $(p_{i,Y})_*(R)$. It follows that $(p_{1,Y})_*(R)\cong (p_{2,Y})_*(R)$. Hence $H^k(Y(\C),(p_{1,Y})_*(R))\cong H^k(Y(\C),(p_{2,Y})_*(R))$. Since  the maps $p_{i,Y}$ are finite sheeted covers,  the Leray spectral sequences collapse to give isomorphisms
\begin{equation}\label{eq:HkXi}
 H^k(X_i(\C),R)\cong H^k(Y(\C),(p_{i,Y})_*(R))
\end{equation}
 Now suppose that $R=\Z$ or $\Q$. Using the language of  variations of Hodge structure  (see  \cite[\S 1-2]{zucker} for the relevant facts), the argument goes as follows. The local systems $(p_{i,Y})_*(R)$ can be regarded as variations of Hodge structures of type $(0,0)$ in a natural way. Consequently, the cohomology groups carry Hodge structures, and the isomorphisms \eqref{eq:HkXi} are compatible with these.  In  more explicit terms, if $V_i$ denotes the unitary flat bundle associated to $(p_{i,Y})_*(R)\otimes \C$, the Hodge structures result from the lattices $H^k(Y(\C),(p_{i,Y})_*(R))$ together with the isomorphisms of $H^k(Y(\C),(p_{i,Y})_*(R))\otimes \C$ to the  spaces of $V_i$--valued harmonic $k$--forms, plus the $(p,q)$ decompositions of the latter. This proves (1). 

The proof of (2) is formally identical, except that one works with the corresponding  \'etale notions \cite{deligne, milne}. Let us assume that $R=\Z_p$ as
 the argument for $\Q_p$ is the same. \'Etale covers of $Y$ are classified by open subgroups of the \'etale fundamental group $\pi_1^{et}(Y)$, which is an extension of $\Gal(\overline{K}/K)$ by the profinite completion of $\pi_1(Y(\C))$; this depends on the choice of a base point.  In particular, $X$ corresponds to a surjective continuous homomorphism  $\rho\colon \pi^{et}_1(Y)\to G$. The local systems (more precisely lisse sheaves, see \cite[Rapport]{deligne}) $(p_{i,Y})_*(\Z_p)$ correspond to the representations of the \'etale fundamental groups $\pi^{et}_1(Y)\to \Z_p[G/P_i]$ defined as above, and these are isomorphic. The cohomology of these sheaves come with canonical Galois actions, and we have isomorphisms $H_{et}^k(X_{i,\overline{K}},\Z_p)\cong H_{et}^k(Y_{\overline{K}},(p_{i,Y})_*(\Z_p))$ compatible with Galois actions. (This  is discussed in  \cite[Rapport, \S1.2-1.4]{deligne} when $K$ is a  finite field, but the same reasoning applied here.) This proves (2).
\end{proof}

\begin{rem}
If the varieties in \eqref{eq:diamond} are replaced by $\Z$--equivalent manifolds $X_1$ and $X_2$, the same argument as above shows that
$H^k(X_1,\Z)\cong H^k(X_2,\Z)$.  This answers Question 2 in Prasad \cite{prasad}. For aspherical manifolds, this also follows from
Corollary \ref{Sec:Coho:C1}.
\end{rem}

\section{Motives}\label{Sec:motives}

An additive category  $C$ is called \textbf{pseudo-abelian} if every idempotent (i.e.~$p^2=p$) morphism $p\colon V\to V$ has a kernel and $V\cong \ker (p)\oplus \ker(1-p)$. The image of  an idempotent $p$ also exists, and is given by $p(V)= \ker(1-p)$. Fixing a pseudo-abelian $\Q$--linear category $C$ and object $V$ on which a finite  group $G$ acts by automorphisms, we have a homomorphism $\Q[G]\to \End_C(V)$ of algebras. Given a subgroup $P\leq G$, we define $V^P\subset V$ to be the image of the idempotent $e_P= \frac{1}{\abs{P}} \sum_{g\in P} g$. 

\begin{lemma}\label{lemma:VPi}
If $V$ is as above with $P_1, P_2 \leq G$ are $\Q$--equivalent subgroups, then $V^{P_1}\cong V^{P_2}$. 
\end{lemma}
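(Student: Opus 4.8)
The plan is to mimic the argument from Lemma \ref{lemma:1}, transporting the algebraic identity \eqref{eq:1} into the pseudo-abelian category $C$. First I would invoke Lemma \ref{lemma:1}: since $(G,P_1,P_2)$ is a Gassman triple, there exists an invertible element $f \in (\Q[G])^\times$ with $e_2 = f^{-1}e_1 f$, where $e_i = e_{P_i}$. Then I would push $f$ and this relation forward along the algebra homomorphism $\Q[G] \to \End_C(V)$ furnished by the $G$-action: this sends $f$ to an invertible endomorphism $\bar f \in \Aut_C(V)$ (invertibility is preserved because $f^{-1}$ maps to a two-sided inverse) and sends the idempotents $e_i$ to the idempotents cutting out $V^{P_i}$, with the conjugation relation still holding in $\End_C(V)$.

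Next I would argue that conjugation by an automorphism carries the image of one idempotent isomorphically onto the image of its conjugate. Concretely, writing $q_i$ for the image of $e_i$ in $\End_C(V)$, so $V^{P_i} = q_i(V) = \ker(1 - q_i)$, the relation $q_2 = \bar f^{-1} q_1 \bar f$ shows that $\bar f$ restricts to a morphism $V^{P_2} = q_2(V) \to q_1(V) = V^{P_1}$ and $\bar f^{-1}$ restricts to its inverse in the opposite direction; indeed $\bar f q_2 = q_1 \bar f$ as morphisms $V \to V$, so $\bar f$ maps $q_2(V)$ into $q_1(V)$ and, symmetrically, $\bar f^{-1}$ maps $q_1(V)$ into $q_2(V)$, and these are mutually inverse on the respective summands. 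This gives the desired isomorphism $V^{P_1} \cong V^{P_2}$ in $C$.

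The one point that needs a little care — and the likeliest source of friction — is making the categorical manipulations precise in a general pseudo-abelian $\Q$-linear category, where $V^{P_i}$ is defined only up to the canonical splitting $V \cong \ker(1-e_{P_i}) \oplus \ker(e_{P_i})$ rather than as an honest subobject. So I would phrase everything in terms of the structure morphisms of these direct-sum decompositions: let $\iota_i \colon V^{P_i} \hookrightarrow V$ and $\pi_i \colon V \twoheadrightarrow V^{P_i}$ be the inclusion and projection with $\iota_i \pi_i = q_i$ and $\pi_i \iota_i = \mathrm{id}$, and then simply check that $\pi_1 \bar f \iota_2 \colon V^{P_2} \to V^{P_1}$ and $\pi_2 \bar f^{-1} \iota_1 \colon V^{P_1} \to V^{P_2}$ are inverse to one another, using $q_1 \bar f = \bar f q_2$ and the idempotent identities. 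This is a routine diagram chase once the relation is in place, so the real content is entirely Lemma \ref{lemma:1}, and the proof is short.

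\begin{proof}
By Lemma \ref{lemma:1}, there is an invertible element $f \in (\Q[G])^\times$ with $e_{P_2} = f^{-1} e_{P_1} f$. The $G$--action on $V$ gives an algebra homomorphism $\Q[G] \to \End_C(V)$; let $\bar f \in \Aut_C(V)$ and $q_i \in \End_C(V)$ denote the images of $f$ and $e_{P_i}$, respectively. Then $q_i$ is idempotent, $V^{P_i} = q_i(V)$, and $q_2 = \bar f^{-1} q_1 \bar f$, equivalently $\bar f q_2 = q_1 \bar f$. Write $\iota_i \colon V^{P_i} \to V$ and $\pi_i \colon V \to V^{P_i}$ for the structure maps of the decomposition $V \cong V^{P_i} \oplus \ker(q_i)$, so that $\iota_i \pi_i = q_i$ and $\pi_i \iota_i = \mathrm{id}_{V^{P_i}}$. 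Set $g = \pi_1 \bar f \iota_2 \colon V^{P_2} \to V^{P_1}$ and $h = \pi_2 \bar f^{-1} \iota_1 \colon V^{P_1} \to V^{P_2}$. Using $\bar f q_2 = q_1 \bar f$, hence also $\bar f^{-1} q_1 = q_2 \bar f^{-1}$, we compute
\[
g h = \pi_1 \bar f \iota_2 \pi_2 \bar f^{-1} \iota_1 = \pi_1 \bar f q_2 \bar f^{-1} \iota_1 = \pi_1 q_1 \bar f \bar f^{-1} \iota_1 = \pi_1 q_1 \iota_1 = \pi_1 \iota_1 \pi_1 \iota_1 = \mathrm{id}_{V^{P_1}},
\]
and symmetrically $h g = \mathrm{id}_{V^{P_2}}$. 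Hence $V^{P_1} \cong V^{P_2}$.
\end{proof}
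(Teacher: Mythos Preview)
Your proof is correct and takes essentially the same approach as the paper: both invoke Lemma \ref{lemma:1} to obtain $f \in (\Q[G])^\times$ with $e_2 = f^{-1}e_1 f$ and then observe that $f$ (via the algebra map $\Q[G] \to \End_C(V)$) induces an isomorphism $V^{P_2} \to V^{P_1}$. The paper's proof is a single sentence to this effect, whereas you have carefully spelled out the pseudo-abelian bookkeeping with the structure maps $\iota_i, \pi_i$; this added detail is fine but not strictly necessary.
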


\begin{proof}
Let $f\in \Q[G]$ be as in Lemma \ref{lemma:1}, then $f\colon V^{P_2} \to V^{P_1}$ is an isomorphism.
\end{proof}

Let $\Var_K$ denote the category of smooth projective varieties over a field $K$ and $\CH^*(X)$ denote the Chow ring of cycles modulo rational equivalence tensored with $\Q$ (see \cite{fulton} for instance). We can form the category $\Cor_K$ of (degree $0$) correspondences: the objects are the same as $\Var_K$, $\Cor(X,Y) = \CH^d(X\times Y)$, where $d=\dim X$ (more details can be found in \cite{fulton, kleiman, scholl}.) The category of effective Chow motives $\Mot^{eff}_K$ is the pseudo-abelian completion of the previous category. More concretely, an object of $\Mot^{eff}_K$ is given by a pair $(X,e)$,
where $e\in \Cor(X,X)$ is an idempotent. Morphisms are given by
\[ \Hom_{\Mot^{eff}}((X,e), (X',e'))= \frac{\{ f\in \Cor(X,X')\mid f\circ e= e'\circ f\}}{\{f\mid f\circ e=e'\circ f=0\}}. \]
Set $M(X) = (X,id)$, which is the \textbf{motive associated to $X$}, and $(X,e)= e(M(X))$. Suppose that a finite group $G$ acts on $X\in \Var_K$. Then we can embed $\Q[G]\subset \Cor(X,X)$, by sending $g$ to the graph of the corresponding automorphism of $X$.

\begin{lemma}
Suppose that $Y\in \Var_K$ and $p\colon X\to Y$ is an Galois \'etale cover with Galois group $G$. Then $[Y]\cong (X,e_G)$, where $e_G=\frac{1}{|G|}\sum_{g\in G} g$.
\end{lemma}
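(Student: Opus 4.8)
The plan is to show that the motive $(X, e_G)$ is isomorphic to $M(Y) = [Y]$ by exhibiting mutually inverse correspondences between $Y$ and $X$ coming from the finite \'etale cover $p$. Concretely, I would use the graph $\Gamma_p \in \CH^{\dim Y}(Y \times X)$ of $p$ (viewed as a correspondence $Y \rightsquigarrow X$) and its transpose ${}^t\Gamma_p \in \CH^{\dim X}(X \times Y)$ (a correspondence $X \rightsquigarrow Y$), up to the normalizing factor $\tfrac{1}{|G|}$. The claim will follow once I verify two things: first, that ${}^t\Gamma_p \circ \Gamma_p = |G| \cdot \mathrm{id}_Y$ in $\CH^{\dim Y}(Y\times Y)$, which is just the statement that $p_* p^* = \deg(p) = |G|$ on cycles (projection formula, since $p$ is finite flat); and second, that $\Gamma_p \circ {}^t\Gamma_p = \sum_{g \in G} \Gamma_g$ in $\CH^{\dim X}(X \times X)$, i.e. the self-intersection of the graph computes the Galois orbit, because the fiber product $X \times_Y X$ is the disjoint union $\bigsqcup_{g\in G} \Gamma_g$ of the graphs of the deck transformations (this is exactly where the Galois hypothesis enters). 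Dividing the first identity by $|G|$ shows that $\tfrac{1}{|G|}{}^t\Gamma_p : X \rightsquigarrow Y$ is a left inverse to $\Gamma_p : Y \rightsquigarrow X$, and the second identity shows $\Gamma_p \circ \tfrac{1}{|G|}{}^t\Gamma_p = e_G$.

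With these two computations in hand, the morphisms $\alpha = \Gamma_p \in \Cor(Y, X)$ and $\beta = \tfrac{1}{|G|}{}^t\Gamma_p \in \Cor(X, Y)$ descend to morphisms $M(Y) = (Y, \mathrm{id}) \to (X, e_G)$ and $(X, e_G) \to (Y, \mathrm{id})$ in $\Mot^{eff}_K$. One must check they are legitimate morphisms in the motive category: $\alpha$ lands in $(X, e_G)$ because $e_G \circ \Gamma_p = \Gamma_p$ (each deck transformation composed with $p$ is again $p$, so $\Gamma_g \circ \Gamma_p = \Gamma_p$ for all $g$, hence $e_G \circ \Gamma_p = \tfrac{1}{|G|}\sum_g \Gamma_p = \Gamma_p$), and symmetrically $\beta \circ e_G = \beta$. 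Then $\beta \circ \alpha = \mathrm{id}_{M(Y)}$ by the first identity and $\alpha \circ \beta = e_G$ which is the identity of the object $(X, e_G)$ by the second identity. Hence $\alpha$ and $\beta$ are mutually inverse isomorphisms $[Y] \cong (X, e_G)$.

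I would present the two correspondence identities as the technical heart of the argument, citing the standard references already in play (\cite{fulton, kleiman, scholl}) for the composition law of correspondences and the projection formula, and for the fact that a finite \'etale Galois cover has $X \times_Y X \cong \bigsqcup_{g \in G} X$ with the $g$-th component the graph $\Gamma_g$. The main obstacle — really the only nontrivial point — is the second identity $\Gamma_p \circ {}^t\Gamma_p = \sum_{g} \Gamma_g$: one must match the definition of composition of correspondences (pull back the two cycles to the triple product $X \times Y \times X$, intersect, push forward to $X \times X$) with the scheme-theoretic fiber product description, and observe that because $p$ is \'etale the intersection is transverse so no excess-intersection corrections appear and the cycle-theoretic class is exactly the sum of the graphs with multiplicity one. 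Everything else is bookkeeping with idempotents in $\Cor(X,X)$ and the projection formula.
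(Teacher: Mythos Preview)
Your proof is correct and complete. It differs from the paper's argument in how the isomorphism is verified: the paper also takes the graph of $p$ as the candidate morphism, but instead of constructing the inverse explicitly it invokes Manin's identity principle, reducing the claim to showing that $\CH^*(X\times Z)^G \to \CH^*(Y\times Z)$ is an isomorphism for every smooth projective $Z$, and then cites \cite[1.7.6]{fulton} for that fact. Your route avoids Manin's principle entirely by writing down the inverse $\tfrac{1}{|G|}{}^t\Gamma_p$ and checking the two compositions directly via the projection formula and the fibre-product identity $X\times_Y X \cong \bigsqcup_{g\in G}\Gamma_g$. The two approaches are morally the same---both rest on Galois descent for the \'etale cover---but yours is more self-contained and transparent about where the factor $|G|$ and the idempotent $e_G$ come from, while the paper's is shorter because it offloads the computation to a standard reference.
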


\begin{proof}
The graph of $p$ defines an element of $\Hom_{\Mot^{eff}}((X,e), Y)$ that we must show is an isomorphism. By Manin's identity principle \cite{scholl}, it is enough to check that $\CH^*((X,e)\otimes Z)\to \CH^*(Y\otimes Z)$ is an isomorphism for every $Z\in \Var_K$.  This map is $\CH^*(X\otimes Z)^G\to \CH^*(Y\otimes Z)$ which is an isomorphism by \cite[1.7.6]{fulton}.
\end{proof}

\begin{cor}\label{Sec:Motive:C1}
$M(Y)\cong e_G(M(X))=M(X)^G$.
\end{cor}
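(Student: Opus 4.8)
\textbf{Proof plan for Corollary \ref{Sec:Motive:C1}.} The plan is to chain together the two lemmas that immediately precede the statement. The previous lemma identifies the object $[Y]$ of $\Mot^{eff}_K$ with the pair $(X,e_G)$, which in the notation introduced after the definition of $\Mot^{eff}_K$ is exactly $e_G(M(X))$. So the first step is to recognize $M(Y) = [Y] \cong e_G(M(X))$, where the isomorphism is the one furnished by that lemma via the graph of $p$.

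Next I would invoke the notation $V^P$ for the image of the idempotent $e_P$ acting on an object $V$ of a pseudo-abelian $\Q$--linear category, as set up just before Lemma \ref{lemma:VPi}. The category $\Mot^{eff}_K$ is pseudo-abelian and $\Q$--linear by construction, and $G$ acts on $M(X)$ through the embedding $\Q[G]\subset \Cor(X,X)$ described in the text (sending $g$ to its graph). Taking $V = M(X)$, $G = G$, and $P = G$ in that setup, the object $V^G = M(X)^G$ is by definition the image of the idempotent $e_G = \frac{1}{|G|}\sum_{g\in G} g$ acting on $M(X)$; but this image is precisely $e_G(M(X))$. Hence $M(X)^G = e_G(M(X))$, and combining with the previous step gives $M(Y) \cong e_G(M(X)) = M(X)^G$, which is the claim.

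I do not anticipate a serious obstacle here: the corollary is essentially a translation of the preceding lemma into the $V^P$ notation, so the only thing to be careful about is that all three expressions $e_G(M(X))$, $M(X)^G$, and $[Y]$ really do refer to the same object of $\Mot^{eff}_K$ — that the idempotent $e_G$ built from graphs of automorphisms of $X$ is the same idempotent appearing in the statement of the preceding lemma, and that "image of $e_G$" in the pseudo-abelian sense agrees with the pair $(X,e_G)$. All of this is immediate from the definitions recorded in the section, so the proof is a one- or two-line concatenation of Lemma (the one identifying $[Y]$ with $(X,e_G)$) and the definitional identity $M(X)^G = e_G(M(X))$.
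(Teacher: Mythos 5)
Your proposal is correct and matches the paper's (implicit) argument exactly: the corollary is just the preceding lemma restated using the conventions $(X,e)=e(M(X))$ and $V^P=$ image of $e_P$. Nothing more is needed.
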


The next result will complete the proof  of Theorem \ref{thm:cohXi}. Recall, we are given a $\Q$--Gassman triple $(G,P_1,P_2)$, a $G$--\'etale cover $X\to Y$ with $X_i = X/P_i$ and $Y \in \Var_K$ where $\mathrm{char}(K) = 0$. 

\begin{prop}\label{prop:motivesX1X2}
$M(X_1)\cong M(X_2)$ in $\Mot^{eff}_K$.
\end{prop}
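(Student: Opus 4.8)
The plan is to realize each $M(X_i)$ as a direct summand of $M(X)$ cut out by an idempotent, and then apply Lemma~\ref{lemma:1} to move one idempotent to the other inside $\Q[G]\subset\Cor(X,X)$. Concretely, $X_i = X/P_i$ is itself the quotient of $X$ by a $P_i$-action, so applying Corollary~\ref{Sec:Motive:C1} with $Y$ replaced by $X_i$ and $G$ replaced by $P_i$ gives $M(X_i)\cong e_{P_i}(M(X)) = M(X)^{P_i}$ in $\Mot^{eff}_K$. This reduces the proposition to the statement $M(X)^{P_1}\cong M(X)^{P_2}$. But this is exactly the situation of Lemma~\ref{lemma:VPi}, applied with $C = \Mot^{eff}_K$ (which is pseudo-abelian and $\Q$-linear by construction), $V = M(X)$, and the $G$-action coming from the embedding $\Q[G]\subset\Cor(X,X)$ that sends $g$ to the graph of the corresponding automorphism. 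Since $(G,P_1,P_2)$ is a $\Q$-Gassman triple, Lemma~\ref{lemma:1} furnishes an $f\in(\Q[G])^\times$ with $e_{P_2} = f^{-1}e_{P_1}f$, and right (or left) multiplication by $f$ then gives the desired isomorphism $M(X)^{P_1}\xrightarrow{\sim} M(X)^{P_2}$.

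So the steps, in order, are: (1) observe that $p_{X,i}\colon X\to X_i$ is a Galois \'etale cover with group $P_i$; (2) invoke Corollary~\ref{Sec:Motive:C1} for each $i$ to get $M(X_i)\cong e_{P_i}(M(X))$; (3) invoke Lemma~\ref{lemma:VPi} (equivalently, Lemma~\ref{lemma:1} directly) in the category $\Mot^{eff}_K$ with $V = M(X)$ to conclude $e_{P_1}(M(X))\cong e_{P_2}(M(X))$; (4) compose the three isomorphisms. The whole argument is essentially bookkeeping once the lemmas are in place.

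The one point that deserves a sentence of care — and the closest thing to an obstacle — is step (1): I should check that $p_{X,i}\colon X\to X_i$ really is a $P_i$-Galois \'etale cover in the sense needed for Corollary~\ref{Sec:Motive:C1}, i.e.\ that forming the quotient $X/P_i$ in $\Var_K$ behaves well (the quotient exists as a smooth projective variety and $X\to X/P_i$ is finite \'etale of degree $|P_i|$ with deck group $P_i$). This follows because $p\colon X\to Y$ is finite \'etale Galois with group $G$, so $P_i\le G$ acts freely on $X$, and the quotient of a quasi-projective variety by a free finite group action exists and is again quasi-projective (and here projective and smooth), with the quotient map finite \'etale Galois of group $P_i$; this is exactly the hypothesis under which the lemma preceding Corollary~\ref{Sec:Motive:C1} was stated. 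Everything else — the identification $\Q[G]\subset\Cor(X,X)$ restricting to $\Q[P_i]\subset\Cor(X,X)$, the compatibility of the idempotents $e_{P_i}$ — is immediate from the definitions already recorded in this section, so no further work is required.

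\begin{proof}[Proof of Proposition~\ref{prop:motivesX1X2}]
For $i=1,2$, the map $p_{X,i}\colon X\to X_i$ is a Galois \'etale cover with Galois group $P_i$. Applying Corollary~\ref{Sec:Motive:C1} to this cover gives $M(X_i)\cong e_{P_i}(M(X)) = M(X)^{P_i}$ in $\Mot^{eff}_K$. Now $\Mot^{eff}_K$ is a pseudo-abelian $\Q$-linear category, $G$ acts on $M(X)$ via the embedding $\Q[G]\subset \Cor(X,X)$, and $(G,P_1,P_2)$ is a $\Q$-Gassman triple. Hence Lemma~\ref{lemma:VPi} applies with $V = M(X)$ and yields $M(X)^{P_1}\cong M(X)^{P_2}$. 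Composing, $M(X_1)\cong M(X)^{P_1}\cong M(X)^{P_2}\cong M(X_2)$.
\end{proof}
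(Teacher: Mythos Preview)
Your proof is correct and follows exactly the same route as the paper: identify $M(X_i)\cong M(X)^{P_i}$ via Corollary~\ref{Sec:Motive:C1}, then apply Lemma~\ref{lemma:VPi} to conclude $M(X)^{P_1}\cong M(X)^{P_2}$. The paper states this in one line; your added verification that $X\to X_i$ is a $P_i$-Galois \'etale cover is a reasonable detail to spell out but not a point of divergence.
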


\begin{proof}
By Lemma \ref{lemma:VPi} and Corollary \ref{Sec:Motive:C1}, we have $M(X_1) \cong M(X)^{P_1}\cong M(X)^{P_2}\cong M(X_2)$.
\end{proof}

The category of motives $\Mot_K$ is obtained by inverting the so called Lefschetz object in $\Mot_K^{eff}$, c.f. \cite{kleiman} (in \cite{andre, scholl}, $\Mot_K$ is constructed from $\Cor_K$ in one step).

\begin{cor}
The motives of $X_1$ and $X_2$ in $\Mot_K$ are isomorphic.
\end{cor}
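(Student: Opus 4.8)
The plan is to deduce this corollary directly from Proposition \ref{prop:motivesX1X2} by applying the localization functor. Recall that $\Mot_K$ is obtained from $\Mot_K^{eff}$ by formally inverting the Lefschetz object $\LL$; concretely, an object of $\Mot_K$ is a triple $(X,e,n)$ with $n\in\Z$, and there is a canonical functor $\Mot_K^{eff}\to\Mot_K$ sending $M(X)$ to $(X,\mathrm{id},0)$. First I would observe that this functor is (covariantly) functorial, hence sends isomorphisms to isomorphisms. Applying it to the isomorphism $M(X_1)\cong M(X_2)$ in $\Mot_K^{eff}$ furnished by Proposition \ref{prop:motivesX1X2} immediately yields an isomorphism of the images in $\Mot_K$, which are by definition the motives of $X_1$ and $X_2$ in $\Mot_K$.

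There is essentially no obstacle here: the only thing to verify is that the localization $\Mot_K^{eff}\to\Mot_K$ is a well-defined functor, which is part of the standard construction (see \cite{kleiman}, or \cite{andre, scholl} where $\Mot_K$ is built from $\Cor_K$ in a single step so that the comparison with $\Mot_K^{eff}$ is built in). One could alternatively spell this out: the explicit isomorphism realizing $M(X_1)\cong M(X_2)$ is given by a correspondence $f\in\Cor(X_1,X_2)$ (coming from the element $f\in(\Q[G])^\times$ of Lemma \ref{lemma:1} via the diagram \eqref{eq:diamond}), and the very same correspondence, viewed in $\Mot_K$, provides the isomorphism there — the composition relations that make it an isomorphism are preserved verbatim. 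Either phrasing gives a one-line proof.

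\begin{proof}
The natural functor $\Mot^{eff}_K\to \Mot_K$ obtained by inverting the Lefschetz object is covariant, and therefore carries the isomorphism $M(X_1)\cong M(X_2)$ of Proposition \ref{prop:motivesX1X2} to an isomorphism between the images of $M(X_1)$ and $M(X_2)$, which are the motives of $X_1$ and $X_2$ in $\Mot_K$.
\end{proof}
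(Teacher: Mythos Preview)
Your proposal is correct and matches the paper's approach exactly: the paper states this corollary without proof, treating it as immediate from Proposition \ref{prop:motivesX1X2} via the canonical functor $\Mot^{eff}_K\to\Mot_K$, which is precisely what you spell out.
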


\begin{rmk}
 Since $H^*(X(\C),\Q)$ and $H_{et}^*(X,\Q_p)$ depend on the underlying motives, we recover Theorem \ref{thm:cohXi} (1) and (2)  for these  coefficients.
Although the previous arguments were more direct.
\end{rmk}

We now prove Theorem \ref{prop:ZequivVar}. Recall that this  says that there are arbitrarily large collections of projective surfaces over $\overline{\Q}$ with distinct fundamental groups (with respect to a fixed embedding $\overline{\Q}\subset \C$) but isomorphic motives. 

\begin{proof}[Proof of Theorem \ref{prop:ZequivVar}]
From  Proposition \ref{TechConProp}, we deduce that there are $j$ pairwise non-isomorphic $\Z$--equivalent compact torsion free lattices $\Delta_i\leq \SU(2,1)$. These act on $\Hy_\C^2$ which can be identified with the complex $2$--ball $B\subset \C^2$. Setting $X_i = B/\Delta_i$, we note that these spaces are projective algebraic by Kodaira's embedding theorem \cite[pp 219--220]{wells}. Each $X_i$ is also rigid Calabi--Vesentini  \cite{CalabiVesentini} and hence defined over $\overline{\Q}$. By construction $\pi_1(X_i)=\Delta_i\ncong \Delta_{i'}=\pi_1(X_{i'})$ when $i\not= i'$. The remaining properties follow from Theorem \ref{thm:cohXi}.
\end{proof}

Let $K_0(\Var_K)$ denote the Grothendieck ring of $K$--varieties.  When $\mathrm{char}(K)=0$, a nice presentation was given by Bittner \cite{bittner}: The generators are isomorphism classes $[X]$ of smooth projective varieties, and $[\Bl_ZX] - [E]= [X]-[Z]$ holds whenever $\Bl_ZX$ is the blow up of $X$ along a smooth subvariety $Z\subset X$ with exceptional divisor $E$. Using this presentation together with the formulas in \cite[pp 77--78]{kleiman}, we get a surjective ring homomorphism 
\[ \chi_m^{eff}\colon K_0(\Var_{\overline{\Q}})\longrightarrow K_0(\Mot_{\overline{\Q}}^{eff}) \]
sending $[X]\mapsto [M(X)]$. This can be thought of as the motivic Euler characteristic. It is natural to ask whether this is an isomorphism; in some form this question goes back to Grothendieck \cite[p 174]{gs}. The right side is a $\Q$--algebra because $\Mot_{\overline{\Q}}$ is $\Q$--linear. Therefore we have and induced homomorphism
\[ \chi_m^{eff}\otimes \Q\colon K_0(\Var_{\overline{\Q}})\otimes \Q\longrightarrow K_0(\Mot_{\overline{\Q}}^{eff}). \]

\begin{thm}\label{Sec:Motive:NonInject}
The homomorphism $\chi_m^{eff}\otimes \Q$ is not injective.
\end{thm}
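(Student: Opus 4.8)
The plan is to use the surfaces $X_1,\dots,X_j$ produced by Theorem \ref{prop:ZequivVar} as the source of nonzero kernel elements. Take $j = 2$, so we have two smooth projective surfaces $X_1, X_2$ over $\overline{\Q}$ with non-isomorphic topological fundamental groups but with $M(X_1)\cong M(X_2)$ in $\Mot^{eff}_{\overline{\Q}}$. Consider the element $[X_1] - [X_2] \in K_0(\Var_{\overline{\Q}})$. Its image under $\chi_m^{eff}$ is $[M(X_1)] - [M(X_2)] = 0$ in $K_0(\Mot^{eff}_{\overline{\Q}})$, so the same holds after tensoring with $\Q$. It therefore suffices to show that $[X_1] - [X_2]$ is not torsion in $K_0(\Var_{\overline{\Q}})$, equivalently that $[X_1] \ne [X_2]$ after $\otimes \Q$; in fact I would aim to show $[X_1]\ne [X_2]$ in $K_0(\Var_{\overline{\Q}})$ outright, and more, that no multiple of the difference vanishes.

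The key step is to separate $[X_1]$ and $[X_2]$ by a motivic invariant that factors through $K_0(\Var_{\overline{\Q}})$, that distinguishes $X_1$ from $X_2$, and that is \emph{additive} in a way that survives $\otimes\Q$. The natural candidate is an invariant built from the fundamental group, since that is exactly what Theorem \ref{prop:ZequivVar}(4) tells us differs. The cleanest route: the fundamental groups $\Delta_1 = \pi_1(X_1)$ and $\Delta_2 = \pi_1(X_2)$ are finite-index subgroups of $\SU(2,1)$ of the \emph{same} index (they are $\Z$-equivalent, hence $\Q$-equivalent, hence $[\Gamma_r:\Delta_1] = [\Gamma_r:\Delta_2]$), so the surfaces have equal Euler characteristics and equal Betti numbers — these will \emph{not} separate them. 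Instead one must use a finer fundamental-group invariant. Since $\Delta_1 \not\cong \Delta_2$ as abstract groups but they become non-conjugate-but-abstractly-distinguishable only via the subgroup structure, a workable invariant is: count, for a suitable fixed finite group $H$, the number of surjections (or all homomorphisms) $\pi_1(X_i) \to H$ up to automorphisms of $H$. Such counts of finite quotients are known to distinguish $\Delta_1$ from $\Delta_2$ when they are non-isomorphic (and one can arrange this by choosing $H$ related to $\PSL(2,\F_{29})$ and the Scott subgroups, tracking the construction in Proposition \ref{TechConProp}), and they only depend on the homotopy type of $X_i(\C)$, hence on $[X_i]$ in a suitable quotient. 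More robustly, one can pass to a \emph{pro-finite} or \emph{$\ell$-adic} refinement and invoke that the profinite completions $\widehat{\Delta_1}, \widehat{\Delta_2}$ are distinguished (Mostow--Prasad rigidity plus the fact that these lattices are conjugacy-separable / the commensurator is small), which bounds how many of the $2^{\beta_{r,Q}}$ lattices can share a profinite completion — this is already implicit in the $C_{\Gamma_r}$-bound argument.

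The main obstacle is making the chosen invariant genuinely factor through $K_0(\Var_{\overline{\Q}})\otimes\Q$ rather than merely through isomorphism classes of surfaces: by Bittner's presentation, a function on smooth projective varieties descends to $K_0$ only if it satisfies the blow-up relation, and descends after $\otimes\Q$ only if it is $\Q$-valued (or valued in a $\Q$-vector space) and additive. A raw count of finite quotients of $\pi_1$ is \emph{not} additive under disjoint union / blow-up, so one cannot use it directly. The fix is the standard one: instead of separating $X_1$ and $X_2$ by an invariant, argue that $[X_1] - [X_2]$ cannot be a $\Q$-linear combination of blow-up relations, by applying a suitable \emph{realization} or by using the structure of $K_0(\Var)\otimes\Q$. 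Concretely, I would compose with a ring homomorphism from $K_0(\Var_{\overline{\Q}})\otimes\Q$ to a ring where the images of $X_1$ and $X_2$ are visibly unequal — for instance, any additive invariant of the pair $(\text{Hodge structure}, \pi_1)$ that is blow-up–compatible, such as a suitable motivic measure twisted by fundamental-group data; but since the Hodge structures and motives agree, the \emph{only} handle is $\pi_1$, and there is no motivic measure known to see $\pi_1$ beyond its abelianization (which here also agrees). This is the genuine difficulty, and I expect the resolution is \emph{not} to find such a measure but rather to invoke Theorem \ref{prop:ZequivVar}(4) together with a counting argument: produce not $2$ but many surfaces $X_1,\dots,X_N$ with pairwise non-isomorphic $\pi_1$ and all isomorphic motives, note their classes $[X_i]$ all have the same image under $\chi_m^{eff}\otimes\Q$, and observe that if $\chi_m^{eff}\otimes\Q$ were injective then all $[X_i]$ would be equal in $K_0(\Var_{\overline{\Q}})\otimes\Q$; then derive a contradiction from the fact that $K_0(\Var_{\overline{\Q}})\otimes\Q$, while large, cannot identify arbitrarily many surfaces with pairwise distinct fundamental groups — for which one needs at least \emph{one} additive $\Q$-valued invariant separating some pair. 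For that single separating invariant I would use the dimension of a piece of cohomology with coefficients in a fixed nontrivial local system pulled from a fixed finite quotient, made additive by working in the Grothendieck group of such local systems — this is the step I would expect to have to argue most carefully, and it is where the compatible-isomorphism machinery of Lemma \ref{Sec:Coho:L4} is conspicuously \emph{unavailable} to the adversary because the relevant quotient groups of $\Delta_1,\Delta_2$ differ.
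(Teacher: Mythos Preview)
Your setup is exactly right: take $X_1,X_2$ from Theorem \ref{prop:ZequivVar}, note $[X_1]-[X_2]\in\ker(\chi_m^{eff}\otimes\Q)$ since $M(X_1)\cong M(X_2)$, and then separate $[X_1]$ from $[X_2]$ in $K_0(\Var_{\overline{\Q}})\otimes\Q$ using the fundamental group. But your proposal stalls at precisely the point where the paper's argument begins, because of one factual error: you write that ``there is no motivic measure known to see $\pi_1$ beyond its abelianization.'' This is false, and it is exactly the missing idea.

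The theorem of Larsen--Lunts \cite{larslunts} gives a ring homomorphism
\[
\lambda\colon K_0(\Var_{\overline{\Q}})\otimes\Q\longrightarrow \Q[\SB_{\overline{\Q}}],
\]
where $\SB_{\overline{\Q}}$ is the monoid of stable birational equivalence classes of smooth projective varieties under product, sending $[X]$ to its stable birational class. Since the fundamental group of a smooth projective variety is a stable birational invariant (blowing up in a smooth center and multiplying by $\PP^n$ do not change $\pi_1$), one gets a monoid homomorphism $\SB_{\overline{\Q}}\to\Grp$, hence a ring homomorphism $\Q[\SB_{\overline{\Q}}]\to\Q[\Grp]$, and composing with $\lambda$ yields a ring homomorphism $K_0(\Var_{\overline{\Q}})\otimes\Q\to\Q[\Grp]$ sending $[X]\mapsto[\pi_1(X(\C))]$. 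This is the paper's Lemma \ref{Sec:Motive:L-end}. Under this map $[X_1]-[X_2]$ goes to $[\Delta_1]-[\Delta_2]$, which is nonzero in the monoid algebra $\Q[\Grp]$ because $\Delta_1\ncong\Delta_2$ are distinct basis elements. That finishes the proof in one line.

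Everything you attempt after the stall --- counting arguments with many $X_i$, additive invariants built from dimensions of cohomology with local-system coefficients --- is unnecessary and, as you yourself sense, does not actually produce a homomorphism out of $K_0(\Var)\otimes\Q$. The whole difficulty you identify (making a $\pi_1$-based invariant respect Bittner's blow-up relation) is dissolved by Larsen--Lunts, whose point is precisely that modulo the Lefschetz class $\LL$ the Grothendieck ring is freely generated by stable birational classes, so \emph{any} stable birational invariant, $\pi_1$ included, extends to a motivic measure.
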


Before proving Theorem \ref{Sec:Motive:NonInject}, we require the following lemma.
Let $\Grp$ be the set of isomorphism classes of  finitely generated  groups. This becomes a commutative
monoid under  the operation $[G_1][G_2]= [G_1\times G_2]$.
Let $\Q[\Grp]$ denote the monoid algebra associated to $\Grp$.

\begin{lemma}\label{Sec:Motive:L-end}
There is a ring homomorphism $K_0(\Var_{\overline{\Q}})\otimes \Q\to \Q[\Grp]$ which sends $[X]\to [\pi_1(X(\C))]$.
\end{lemma}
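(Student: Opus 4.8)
The plan is to build the homomorphism $K_0(\Var_{\overline{\Q}})\otimes \Q \to \Q[\Grp]$ by exhibiting a well-defined map on the level of the Bittner presentation and then checking multiplicativity. First I would define a set-theoretic assignment $X \mapsto [\pi_1(X(\C))]$ for smooth projective $X$ over $\overline{\Q}$; since any two embeddings $\overline{\Q}\hookrightarrow \C$ give homeomorphic (indeed, after conjugation, isomorphic fundamental group) complex manifolds — actually it is cleanest to fix the embedding $\overline{\Q}\subset \C$ once and for all, as in Theorem \ref{prop:ZequivVar} — this is unambiguous. Extending $\Q$--linearly, the content is to verify that the Bittner relation $[\Bl_Z X] - [E] = [X] - [Z]$ is respected. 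Here I would use the fact that if $Z\subset X$ is a smooth closed subvariety of a smooth projective variety, then $\pi_1(\Bl_Z X) \cong \pi_1(X)$ (blowing up along a smooth center does not change the fundamental group, e.g.\ because $\Bl_Z X \to X$ has connected fibers and one can apply the standard $\pi_1$--exact sequence for a proper surjection with connected fibers, or simply because removing a codimension $\geq 2$ subset does not change $\pi_1$ and $\Bl_ZX \setminus E \cong X\setminus Z$ while $E$ and $Z$ are themselves removable from the relevant arguments). Likewise $\pi_1(E) \cong \pi_1(Z)$ since $E\to Z$ is a Zariski-locally trivial projective bundle, hence has simply connected fibers, so $\pi_1(E)\cong \pi_1(Z)$. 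Therefore in $\Q[\Grp]$ we have $[\pi_1(\Bl_ZX)] - [\pi_1(E)] = [\pi_1(X)] - [\pi_1(Z)]$, exactly matching the relation.

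The second step is to confirm this is a ring homomorphism, i.e.\ that it sends $[X\times X']$ to $[\pi_1(X\times X')] = [\pi_1(X)][\pi_1(X')]$ in $\Q[\Grp]$; this is immediate from $\pi_1((X\times X')(\C)) = \pi_1(X(\C))\times \pi_1(X'(\C))$ together with the definition of multiplication in the monoid $\Grp$, and the unit $[\mathrm{pt}] = [\Spec \overline{\Q}]$ maps to $[\{1\}]$, the unit of $\Q[\Grp]$. One small point to address is that the monoid algebra $\Q[\Grp]$ is indeed a commutative ring (it is, by construction, the monoid algebra of the commutative monoid $\Grp$), so the target is legitimate. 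A caveat worth stating explicitly: it is not obvious — and not needed — that this factors through $K_0(\Mot)$; the whole point is that it does not, which is what forces the kernel of $\chi_m^{eff}\otimes\Q$ to be nontrivial in Theorem \ref{Sec:Motive:NonInject}.

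The main obstacle I anticipate is the topological input that blowing up along a smooth center leaves $\pi_1$ unchanged, and that a projective bundle has the same $\pi_1$ as its base. Neither is deep, but both must be cited or argued carefully: for the projective bundle $E = \mathbb{P}(N_{Z/X}) \to Z$ one uses the long exact homotopy sequence of a fibration with fiber $\mathbb{P}^{c-1}_{\C}$ ($c = \mathrm{codim}$), and $\pi_1(\mathbb{P}^{c-1}) = 1 = \pi_2(\mathbb{P}^{c-1})$... wait, $\pi_2(\mathbb{P}^{c-1})\neq 0$, but one only needs $\pi_1(\mathrm{fiber}) = 0$ to conclude $\pi_1(E)\xrightarrow{\sim}\pi_1(Z)$ provided $\pi_0$ of the fiber is trivial too, which holds; so this is fine. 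For the blow-up $\pi\colon \Bl_ZX \to X$, the morphism is proper with connected fibers ($\pi^{-1}(x)$ is a point off $E$ and $\mathbb{P}^{c-1}$ over $Z$), and a proper surjective morphism of normal varieties with connected fibers induces a surjection on $\pi_1$ with kernel normally generated by loops in the fibers — since the generic fiber is a point and the special fibers are simply connected, $\pi_1(\Bl_ZX)\xrightarrow{\sim}\pi_1(X)$. Alternatively one can avoid fibration arguments entirely by noting $\Bl_ZX\setminus E \cong X\setminus Z$, that $\pi_1(X\setminus Z)\twoheadrightarrow \pi_1(X)$ with kernel generated by meridians of the components of $Z$ of codimension $1$ (none, since $\mathrm{codim}\, Z\geq 2$ for the relation to be the interesting one, and if $\mathrm{codim}\,Z = 1$ then $\Bl_ZX = X$ and the relation is trivial), so $\pi_1(X\setminus Z)\cong \pi_1(X)$, and similarly $\pi_1(\Bl_ZX\setminus E)\cong \pi_1(\Bl_ZX)$ because $E$ is an irreducible divisor but $\Bl_ZX\setminus E$ is the complement of a smooth divisor — here one does pick up a meridian, so this route needs the fibration argument after all to kill it. In the write-up I would therefore go with the fibration/connected-fibers argument, state it as a lemma with a one-line proof citing the homotopy exact sequence, and keep the rest purely formal.

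\begin{proof}[Proof of Lemma \ref{Sec:Motive:L-end}]
Fix the embedding $\overline{\Q}\subset \C$. For a smooth projective variety $X$ over $\overline{\Q}$, the complex manifold $X(\C)$ is connected, and we set $\varphi([X]) = [\pi_1(X(\C))]\in \Grp$; extending $\Q$--linearly gives a $\Q$--linear map from the free $\Q$--vector space on isomorphism classes of smooth projective $\overline{\Q}$--varieties to $\Q[\Grp]$. To see that this descends to $K_0(\Var_{\overline{\Q}})\otimes\Q$, by Bittner's presentation \cite{bittner} it suffices to check the relation $[\Bl_Z X] - [E] = [X] - [Z]$, where $Z\subset X$ is a smooth closed subvariety and $E$ the exceptional divisor. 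If $\mathrm{codim}\, Z = 1$ then $\Bl_ZX = X$, $E = Z$, and there is nothing to prove; so assume $c = \mathrm{codim}\,Z\geq 2$. The projection $E = \mathbb{P}(N_{Z/X})\to Z$ is a Zariski-locally trivial $\mathbb{P}^{c-1}$--bundle, hence a fiber bundle over $Z(\C)$ with connected, simply connected fiber $\mathbb{P}^{c-1}(\C)$; the long exact homotopy sequence gives $\pi_1(E(\C))\cong \pi_1(Z(\C))$. The blow-down $\pi\colon \Bl_ZX\to X$ is proper and surjective with all fibers connected and simply connected (a point over $X\setminus Z$, a copy of $\mathbb{P}^{c-1}(\C)$ over $Z$), so $\pi$ induces an isomorphism $\pi_1(\Bl_ZX(\C))\cong \pi_1(X(\C))$. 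Therefore in $\Grp$, and hence in $\Q[\Grp]$,
\[ \varphi([\Bl_ZX]) - \varphi([E]) = [\pi_1(X(\C))] - [\pi_1(Z(\C))] = \varphi([X]) - \varphi([Z]), \]
so $\varphi$ is a well-defined $\Q$--linear map $K_0(\Var_{\overline{\Q}})\otimes\Q\to\Q[\Grp]$. Finally, $\varphi$ is multiplicative: for smooth projective $X, X'$ we have $\pi_1((X\times X')(\C)) = \pi_1(X(\C))\times\pi_1(X'(\C))$, which is exactly the product $[\pi_1(X(\C))][\pi_1(X'(\C))]$ in the monoid $\Grp$, and $\varphi([\Spec\overline{\Q}]) = [\{1\}]$ is the unit of $\Q[\Grp]$. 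Hence $\varphi$ is a ring homomorphism sending $[X]\mapsto[\pi_1(X(\C))]$.
\end{proof}
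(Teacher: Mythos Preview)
Your proof is correct but takes a genuinely different route from the paper's. The paper factors through the Larsen--Lunts theorem \cite{larslunts}: there is a ring homomorphism $K_0(\Var_{\overline{\Q}})\to \Z[\SB_{\overline{\Q}}]$ to the monoid algebra of stable birational equivalence classes of smooth projective $\overline{\Q}$--varieties, and since $\pi_1$ is a stable birational invariant of smooth projective varieties, one gets a monoid homomorphism $\SB_{\overline{\Q}}\to \Grp$ and composes. You instead work directly with Bittner's presentation, verifying by hand that $\pi_1(\Bl_Z X)\cong\pi_1(X)$ and $\pi_1(E)\cong\pi_1(Z)$. Your approach is more self-contained and avoids citing Larsen--Lunts (whose proof itself goes through Bittner, so you are in effect unpacking one layer); the paper's route is shorter because it black-boxes both that result and the stable-birational invariance of $\pi_1$ in a single stroke.

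Two small glosses in your write-up are worth tightening. First, a smooth projective variety need not be connected, so ``$X(\C)$ is connected'' is false as stated; either restrict to connected generators (Bittner's presentation allows this, with $[X\sqcup X']=[X]+[X']$ handled additively) or define $\varphi$ component-wise. Second, ``proper surjective with connected, simply connected fibers implies an isomorphism on $\pi_1$'' is not a general theorem as phrased; for the blow-down map it is true, but the clean justification is either the argument you sketched in your discussion (compare $\pi_1(X\setminus Z)\cong\pi_1(X)$ via $\mathrm{codim}\,Z\ge 2$ with the surjection $\pi_1(\Bl_ZX\setminus E)\twoheadrightarrow\pi_1(\Bl_ZX)$, and observe the composite $\pi_1(X)\cong\pi_1(\Bl_ZX\setminus E)\twoheadrightarrow\pi_1(\Bl_ZX)\twoheadrightarrow\pi_1(X)$ is the identity) or a direct citation of birational invariance of $\pi_1$ for smooth projective varieties.
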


\begin{proof}
Two varieties $X_1,X_2$ are stably birational if $X_1\times \PP^n$ is birational to $X_2\times \PP^m$ for some $n,m$. Let $\SB_{\overline{\Q}}$ denote the set of stable birational classes of smooth projective varieties defined over $\overline{\Q}$. Products of varieties makes this into a commutative monoid.  By a theorem of Larsen--Lunts \cite{larslunts}, there exists a homomorphism $\lambda\colon K_0(\Var_{\overline{\Q}})\otimes \Q\to \Q[\SB_{\overline{\Q}}]$ sending $[X]$ to the stable birational class of $X$. Since stably birationally equivalent smooth projective varieties have isomorphic fundamental groups, $X\mapsto \pi_1(X(\C))$ induces homomorphism of monoids $\SB_{\overline{\Q}}\to \Grp$ and of rings $\Q[\SB_{\overline{\Q}}]\to \Q[\Grp]$. 
 Compose this with $\lambda$ to get  the desired homomorphism.
\end{proof}

\begin{proof}[Proof of Theorem \ref{Sec:Motive:NonInject}]
Taking $X_i$ as in Theorem  \ref{prop:ZequivVar}, we see that $[X_1]-[X_2]$ lies in the kernel and is non-zero by Lemma \ref{Sec:Motive:L-end}.
\end{proof}

\begin{cor}
 The composition $\chi_m\otimes \Q\colon K_0(\Var_{\overline{\Q}})\otimes \Q\to K_0(\Mot_{\overline{\Q}})$ is also not injective.
\end{cor}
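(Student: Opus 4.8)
The plan is to deduce this from Theorem~\ref{Sec:Motive:NonInject} together with the standard fact that inverting the Lefschetz object is a ring homomorphism, so that the non-injectivity of $\chi_m^{eff}\otimes\Q$ is transported to $\chi_m\otimes\Q$. First I would observe that the canonical functor $\Mot_{\overline{\Q}}^{eff}\to\Mot_{\overline{\Q}}$ (localization at the Lefschetz motive) induces a ring homomorphism $\ell\colon K_0(\Mot_{\overline{\Q}}^{eff})\to K_0(\Mot_{\overline{\Q}})$ on Grothendieck rings, sending $[M(X)]$ to $[M(X)]$, and that by construction $\chi_m\otimes\Q = \ell\circ(\chi_m^{eff}\otimes\Q)$ as maps $K_0(\Var_{\overline{\Q}})\otimes\Q\to K_0(\Mot_{\overline{\Q}})$. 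Thus it suffices to exhibit a nonzero element of $K_0(\Var_{\overline{\Q}})\otimes\Q$ that dies in $K_0(\Mot_{\overline{\Q}})$.

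Next I would reuse the very element already produced: take the surfaces $X_1,\dots,X_j$ from Theorem~\ref{prop:ZequivVar} (it is enough to take $j=2$), and consider $[X_1]-[X_2]\in K_0(\Var_{\overline{\Q}})\otimes\Q$. By Theorem~\ref{prop:ZequivVar}(3) we have $M(X_1)\cong M(X_2)$ in $\Mot_{\overline{\Q}}^{eff}$, hence also in $\Mot_{\overline{\Q}}$ after localization, so $[M(X_1)]=[M(X_2)]$ in $K_0(\Mot_{\overline{\Q}})$ and therefore $(\chi_m\otimes\Q)([X_1]-[X_2])=0$. On the other hand, $[X_1]-[X_2]\neq 0$ in $K_0(\Var_{\overline{\Q}})\otimes\Q$ by Lemma~\ref{Sec:Motive:L-end}, since the homomorphism to $\Q[\Grp]$ sends $[X_1]-[X_2]$ to $[\pi_1(X_1(\C))]-[\pi_1(X_2(\C))]$, which is nonzero in the monoid algebra $\Q[\Grp]$ because $\pi_1(X_1(\C))=\Delta_1\not\cong\Delta_2=\pi_1(X_2(\C))$ by Theorem~\ref{prop:ZequivVar}(4). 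Hence $[X_1]-[X_2]$ is a nonzero element of the kernel of $\chi_m\otimes\Q$, which is the claim.

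The only point requiring any care is the compatibility $\chi_m\otimes\Q=\ell\circ(\chi_m^{eff}\otimes\Q)$, i.e.\ that passing from effective motives to all motives is compatible with the formation of Grothendieck rings and with the motivic Euler characteristic; this is essentially formal, following from the fact that $\Mot_{\overline{\Q}}$ is obtained from $\Mot_{\overline{\Q}}^{eff}$ by a localization (inverting the Lefschetz object), which is a symmetric monoidal functor and hence induces a ring map on $K_0$, commuting with $[X]\mapsto[M(X)]$ since the defining relations of Bittner's presentation map to the same relations in both targets via the formulas of \cite[pp.~77--78]{kleiman}. So there is no real obstacle here: the corollary is a direct consequence of Theorem~\ref{Sec:Motive:NonInject}, and the argument is little more than bookkeeping.
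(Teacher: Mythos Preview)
Your proof is correct and matches the paper's intended argument: the corollary is stated in the paper without an explicit proof because it follows immediately from Theorem~\ref{Sec:Motive:NonInject} via the factorization $\chi_m\otimes\Q=\ell\circ(\chi_m^{eff}\otimes\Q)$, so the same element $[X_1]-[X_2]$ from the proof of the theorem does the job. Your write-up simply makes this explicit, and the paper additionally notes (as a remark, not as its proof) an alternative route via Borisov's result that the Lefschetz class is a zero divisor.
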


This statement can also be deduced from work of Borisov \cite{borisov}, who
shows that the Lefschetz class $\LL = [\PP^1]-[\mathrm{pt}]\in K_0(\Var_{\overline{\Q}})$ is a zero divisor. Elements annihilated by $\LL$ must lie
in the kernel  of $\chi_m$ because $\chi_m(\LL)$ is invertible.



\end{document}